\newtheorem{Theorem}{Theorem}[section]
\newtheorem{Lemma}{Lemma}[section]
\newtheorem{Proposition}{Proposition}[section]
\theoremstyle{definition}
\theoremstyle{remark}
\newtheorem{Remark}{Remark}[section]
\numberwithin{equation}{section}
\renewcommand{\u}{{\bf u}}
\newcommand{\R}{{\mathbb R}}
\newcommand{\Dv}{{\rm div}}
\newcommand{\m}{{\bf m}}
\def\f{\frac}
\renewcommand{\O}{\Omega}
\def\D{\Delta }
\def\hf1{^\f{1}{1-\xi^2}}
\def\be{\begin{equation}}
\def\en{\end{equation}}
\def\bs{\begin{split}}
\def\es{\end{split}}
\renewcommand{\v}{{\bf v}}
\author{Alexis F. Vasseur}
\address{Department of Mathematics,
The University of Texas at Austin.}
\email{vasseur@math.utexas.edu}
\author{Cheng Yu}
\address{Department of Mathematics,
The University of Texas at Austin.}
\email{yucheng@math.utexas.edu}
\title[Global solutions to the Navier-Stokes Equations]
{Existence of Global Weak Solutions for 3D Degenerate Compressible Navier-Stokes Equations}
\subjclass[2010]{35Q35, 76N10}
\keywords{Global weak solutions, compressible Navier-Stokes equations, shallow water equations, vacuum, degenerate viscosity.}
\date{\today}
\begin{document}
\begin{abstract}
 In this paper, we prove the existence of global weak solutions for  3D compressible Navier-Stokes equations with degenerate viscosity. The method is based on the Bresch and Desjardins entropy conservation \cite{BD}.  The main contribution of this paper is to derive  the Mellet-Vasseur type inequality \cite{MV} for the weak solutions, even if it is  not verified by the first level of approximation.  This provides existence of global solutions in time, for
 the compressible Navier-Stokes equations, for any $\gamma>1$ in two dimensional space and for $1<\gamma<3$ in three dimensional space, with large initial data possibly
  vanishing on the  vacuum. This solves an open problem proposed by Lions in \cite{Lions}.
\end{abstract}

\maketitle

\section{Introduction}
The existence of global weak solutions of compressible Navier-Stokes equations with degenerate viscosity has been a long standing open problem. The objective of this current paper is to establish the existence of global weak solutions to the following 3D compressible Navier-Stokes equations:
\begin{equation}
\label{NS equation}
\begin{split}
&\rho_t+\Dv(\rho\u)=0\\
&(\rho\u)_t+\Dv(\rho\u\otimes\u)+\nabla P-\Dv(\rho\mathbb{D}\u)=0,
\end{split}
\end{equation}
with initial data
\begin{equation}
\label{initial data}
\rho|_{t=0}=\rho_0(x),\;\;\;\;\;\rho\u|_{t=0}=\m_0(x).
\end{equation}
where $P=\rho^{\gamma},\,\gamma>1,$ denotes the pressure, $\rho$ is the density of fluid, $\u$ stands for the velocity of fluid, $\mathbb{D}\u=\frac{1}{2}[\nabla\u+\nabla^T\u]$ is the strain tensor.
For the sake of simplicity we will consider the case of bounded domain with periodic boundary conditions, namely $\O=\mathbb{T}^3$.
\vskip0.3cm
In the case $\gamma=2$ in two dimensional space, this corresponds to the shallow water equations, where $\rho(t,x)$ stands for the height of the water at position $x$, and time $t$, and $\u(t,x)$ is the 2D velocity at the same position, and same time. In this case, the physical viscosity was formally derived  as in (\ref{NS equation}) (see Gent \cite{G}). In this context, the global existence of weak solutions to equations (\ref{NS equation}) is proposed as an open   problem by Lions  in  \cite{Lions}. A careful derivation of the shallow water equations with the following viscosity term
$$2\Dv(\rho\mathbb{D}\u)+2\nabla(\rho\Dv\u)$$
can be found in the recent work by Marche \cite{FA}. Bresch-Noble \cite{BN,BN2} provided the mathematical derivation of viscous shallow-water equations with the above viscosity. However, this viscosity cannot be  covered by the  BD entropy.

\vskip0.3cm

Compared with the incompressible flows, dealing with the vacuum
is a very challenging problem in the study of the compressible flows.  Kazhikhov and Shelukhin \cite{KS} established the first existence result
 on the compressible Navier-Stokes equations in one dimensional space. Due to the difficulty from the vacuum, the initial density should be bounded away from zero in their work.
 It has been extended by Serre \cite{S} and Hoff \cite{Hoff87} for the discontinuous initial data, and by Mellet-Vasseur \cite{MV2} in the case of density dependent viscosity coefficient, see also in spherically symmetric case \cite{DNV00,DNV01,GJX}.  For the multidimensional case,
Matsumura and Nishida \cite{MN79,MN80,MN83} first established the global existence with the small initial data, and later by Hoff \cite{H95JDE,H95,H97} for discontinuous initial data.
To remove the difficulty from the vacuum,
 Lions in \cite{Lions} introduced the concept of  renormalized solutions to establish the global existence of weak solutions for $\gamma>\frac{9}{5}$ concerning large initial
  data that may vanish, and then Feireisl-Novotn\'{y}-Petzeltov\'{a} \cite{FNP} and Feireisl \cite{F04} extended the existence results to $\gamma>\frac{3}{2}$, and even to Navier-Stokes-Fourier system.
   In all above works, the viscosity coefficients were assumed to be fixed positive numbers.
   This is important to control the gradient of the velocity,  in the context of solutions close to an equilibrium, a breakthrough was obtained by Danchin \cite{D, D2}.
 However, the regularity and the uniqueness of the weak solutions for large data remain largely open for the compressible Navier-Stokes equations, even as in two dimensional space, see Vaigant-Kazhikhov \cite{VK} (see also Germain \cite{Germain}, and Haspot \cite{Haspot}, where criteria for regularity or uniqueness are proposed).
\vskip0.3cm

 The problem becomes even more challenging when the viscosity coefficients depend on the density. Indeed, the Navier-Stokes equations \eqref{generality equation} is highly
 degenerated at the vacuum because the velocity cannot even be defined when the density vanishes. It is very difficult to deduce any estimate of the gradient on
 the velocity field due to the vacuum. This is  the essential difference from the compressible Navier-Stokes equations with the non-density dependent viscosity coefficients.
  The first tool of handling this difficulty is due to Bresch, Desjardins and Lin, see \cite{BDL}, where the authors developed a new mathematical entropy to show the
   structure of the diffusion terms providing some regularity for the density. An early version of this entropy can be found in 1D for constant viscosity in \cite{Sh, V}. The result was later extended for the case
with an additional quadratic friction term $r\rho|\u|\u$,
refer to Bresch-Desjardins \cite{BD,BD2006} and the recent results by Bresch-Desjardins-Zatorska \cite{BDZ} and by Zatorska \cite{Z}.  Unfortunately, those bounds are not enough to treat the compressible Navier-Stokes equations without additional control on the vacuum, as the introduction of   capillarity,  friction, or cold pressure.
\vskip0.3cm

The primary obstacle to prove the compactness of the solutions to \eqref{generality equation} is the lack of  strong convergence for $\sqrt{\rho}\u$ in $L^2$. We cannot pass to the limit in the term $\rho\u\otimes\u$ without
 the strong convergence of $\sqrt{\rho}\u$ in $L^2$. This is an other essential difference with  the case of non-density dependent viscosity.
To solve this problem, a new estimate is established in Mellet-Vasseur \cite{MV},  providing a $L^{\infty}(0,T;L\log L(\O))$ control on  $\rho|\u|^2$.  This new estimate  provides the weak stability of  smooth solutions of \eqref{generality equation}.
\vskip0.3cm

The classical way to construct global weak solutions of \eqref{generality equation} would consist in
constructing  smooth approximation solutions, verifying the   priori estimates, including the Bresch-Desjardins entropy, and the Mellet-Vasseur inequality. However, those extra estimates impose a lot of structure on the approximating system. Up to now, no such approximation scheme has been discovered. In \cite{BD, BD2006}, Bresch and Desjardins propose a very nice construction of approximations, controlling both the usual energy and BD entropy.  This allows the construction of weak solutions, when additional terms -as drag terms, or cold pressure, for instance- are added. Note that their result holds true even in dimension 3. However, their construction does not provide the control of the $\rho \u$ in
 $L^{\infty}(0,T;L\log L(\O))$.
\vskip0.3cm

The objective of our current work is to investigate the issue of existence of solutions for the compressible Navier-Stokes equations \eqref{NS equation} with large initial data in 3D. Jungel \cite{J} studied the compressible
Navier-Stokes equations with the Bohm potential $\kappa\rho\left(\frac{\D\sqrt{\rho}}{\sqrt{\rho}}\right)$, and obtained the existence of a particular weak solution. Moreover, he deduced an estimate of $\nabla\rho^{\frac{1}{4}}$ in $L^4((0,T)\times\O),$ which is very useful in this current paper.
In \cite{GV}, Gisclon and Lacroix-Violet showed the existence of usual weak solutions for the compressible quantum Navier-Stokes equations with the addition of a cold pressure.
 Independently, we proved the existence of weak solutions to the compressible  quantum Navier-Stokes equations with damping terms, see \cite{VY-1}. This result is very similar to \cite{GV}.
  Actually, it is written in \cite{GV} that they can handle in a similar way the case with the drag force.
  Unfortunately, the case with the cold pressure is not suitable for our purpose.
\vskip0.3cm

 Building up from the result \cite{VY-1} (a variant of \cite{GV}),  we establish the logarithmic estimate for the weak solutions similar to \cite{MV}. For this, we first derive a ``renormalized" estimate on $\rho \varphi(|\u|)$, for $\varphi$ nice enough, for solutions of \cite{VY-1} with the additional drag forces. It is showed to be independent on the strength of those drag forces, allowing to pass into the limit when those forces vanish. Since this estimate cannot be derived from the approximation scheme of \cite{VY-1}, it has to be carefully derived on weak solutions. After passing into the limit $\kappa$ goes to 0, we can recover the logarithmic estimate, taking a suitable function $\varphi$. This is reminiscent to showing the conservation of the energy for weak solutions to incompressible Navier-Stokes equations. This conservation is true for smooth solutions. However, it is a long standing open problem, whether Leray-Hopf weak solutions are also conserving energy.
 \vskip0.3cm
Equation (\ref{NS equation}) can be seen as a particular case of the following Navier-Stokes
\begin{equation}
\label{generality equation}
\begin{split}
&\rho_t+\Dv(\rho\u)=0\\
&(\rho\u)_t+\Dv(\rho\u\otimes\u)+\nabla P-\Dv(\mu(\rho)\mathbb{D}\u)-\nabla(\lambda(\rho)\Dv\u)=0,
\end{split}
\end{equation}
where the  viscosity coefficients $\mu(\rho)$ and $\lambda(\rho)$ depend on the density, and  may vanish on the vacuum. When the coefficients verify the following condition:
$$\lambda(\rho)=2\rho\mu^{'}(\rho)-2\mu(\rho)$$
the system still formally verifies the BD estimates.  However, the construction of Bresch and Desjardins in \cite{BD2006} is more subtle in this case. Up to now, construction of weak solutions are known, only verifying a fixed combination of the classical energy and BD entropy (see \cite{BDZ}) in the case with additional terms. Those solutions verify the decrease of this so-called $\kappa$-entropy\footnote{Note that $\kappa$ here is not related to the $\kappa$ term in (\ref{NSK}).}, but not the decrease of  Energy and BD entropy  by themselves.
The extension of our result,  in this context,  is considered in \cite{VY}.
\vskip0.3cm

The basic energy inequality associated to \eqref{NS equation} reads as
\begin{equation}
\label{energy inequality for NS}
E(t)+\int_0^T\int_{\O}\rho|\mathbb{D}\u|^2\,dx\,dt\leq E_0,
\end{equation}
where $$E(t)=E(\rho,\u)(t)=\int_{\O}\left(\frac{1}{2}\rho|\u|^2+\frac{1}{\gamma-1}\rho^{\gamma}\right)\,dx,$$
and  $$E_0=E(\rho,\u)(0)=\int_{\O}\left(\frac{1}{2}\rho_0|\u_0|^2+\frac{1}{\gamma-1}\rho_0^{\gamma}\right)\,dx.$$
Remark that those a priori estimates are not enough to show the stability of the solutions of \eqref{NS equation}, in particular,
 for the compactness of $\rho^{\gamma}.$ Fortunately, a particular mathematical structure was found in \cite{BD,BDL}, which yields the bound of $\nabla\rho^{\frac{\gamma}{2}}$ in $L^{2}(0,T;L^2(\O)) $. More precisely,
we have the following Bresch-Desjardins entropy
\begin{equation*}
\begin{split}
&\int_{\O}\left(\frac{1}{2}\rho|\u+\nabla\ln\rho|^2+\frac{\rho^{\gamma}}{\gamma-1}\right)\,dx+\int_0^T\int_{\O}|\nabla\rho^{\frac{\gamma}{2}}|^2\,dx\,dt
\\&+\int_0^T\int_{\O}\rho|\nabla\u-\nabla^T\u|^2\,dx\,dt
\leq\int_{\O}\left(\rho_0|\u_0|^2+|\nabla\sqrt{\rho_0}|^2+\frac{\rho_0^{\gamma}}{\gamma-1}\right)\,dx.
\end{split}
\end{equation*}

Thus, the initial data should be given in such way that
\begin{equation}
\begin{split}
\label{initial condition}
&\rho_0\in L^{\gamma}(\O),\;\;\;\rho_0\geq 0,\;\;\; \nabla\sqrt{\rho_0}\in L^2(\O),\\
&\m_0\in L^1(\O),\;\;\m_0=0\;\;\text{ if } \;\rho_0=0,\;\;\frac{|\m_0|^2}{\rho_0}\in L^1(\O).
\end{split}
\end{equation}
\begin{Remark}
The initial condition $\nabla\sqrt{\rho_0}\in L^2(\O)$ is from the Bresch-Desjardins entropy.
\end{Remark}

The primary obstacle to prove the compactness of the solutions to \eqref{NSK} with $r_0=r_1=0$ is the lack of  strong convergence for $\sqrt{\rho}\u$ in $L^2$.
Jungel \cite{J} proved the existence of a particular weak solutions with test function $\rho\varphi,$ which was used in \cite{BDL}. The main idea of his paper is to rewrite quantum Navier-Stokes equations as a viscous quantum Euler system by means of the effective velocity. In \cite{J}, he also proved inequality \eqref{J inequality for weak solutions} which is crucial to get a key lemma in this current paper. Motivated by the works of \cite{BD, BDL, J}, we proved the existence of weak solutions to \eqref{NSK} and the inequality \eqref{J inequality for weak solutions}, see \cite{VY-1}. The advantage of $r_0$ and $r_1$ terms is that there is a compactness $\rho\u\otimes\u$ in $L^1$ and the strong convergence of $\sqrt{\rho}\u$ in $L^2.$
In particular, we need to recall the following existence result in \cite{VY-1}.
\begin{Proposition}
\label{pro weak solutions}

For any $\kappa\geq 0$,
there exists a global weak solution to the following system
\begin{equation}
\begin{split}
\label{NSK}
&\rho_t+\Dv(\rho\u)=0,
\\ &(\rho\u)_t+\Dv(\rho\u\otimes\u)+\nabla\rho^{\gamma}-\Dv(\rho\mathbb{D}\u)=-r_0\u-r_1\rho|\u|^2\u+\kappa\rho\nabla(\frac{\D\sqrt{\rho}}{\sqrt{\rho}}),
\end{split}
\end{equation}
with the initial data \eqref{initial data} and satisfying \eqref{initial condition} and $-r_0\int_{\O}\log_{-}\rho_0\,dx<\infty$. In particular, we have the energy inequality
\begin{equation}
\label{energy inequality for approximation}
E(t)+\int_0^T\int_{\O}\rho|\mathbb{D}\u|^2\,dx\,dt+r_0\int_0^T\int_{\O}|\u|^2\,dx\,dt+r_1\int_0^T\int_{\O}\rho|\u|^4\,dx\,dt\leq E_0,
\end{equation}
where $$E(t)=E(\rho,\u)(t)=\int_{\O}\left(\frac{1}{2}\rho|\u|^2+\frac{1}{\gamma-1}\rho^{\gamma}+\frac{\kappa}{2}|\nabla\sqrt{\rho}|^2\right)\,dx,$$
and  $$E_0=E(\rho,\u)(0)=\int_{\O}\left(\frac{1}{2}\rho_0|\u_0|^2+\frac{1}{\gamma-1}\rho_0^{\gamma}+\frac{\kappa}{2}|\nabla\sqrt{\rho_0}|^2\right)\,dx;$$
and the BD-entropy
\begin{equation}
\begin{split}
\label{BD entropy for approximation}
&\int_{\O}\left(\frac{1}{2}\rho|\u+\nabla\ln\rho|^2+\frac{\rho^{\gamma}}{\gamma-1}+\frac{\kappa}{2}|\nabla\sqrt{\rho}|^2-r_0\log\rho\right)\,dx+\int_0^T\int_{\O}|\nabla\rho^{\frac{\gamma}{2}}|^2\,dx\,dt
\\&+\int_0^T\int_{\O}\rho|\nabla\u-\nabla^T\u|^2\,dx\,dt+\kappa\int_0^T\int_{\O}\rho|\nabla^2\log\rho|^2\,dx\,dt
\\&\leq2\int_{\O}\left(\rho_0|\u_0|^2+|\nabla\sqrt{\rho_0}|^2+\frac{\rho_0^{\gamma}}{\gamma-1}+\frac{\kappa}{2}|\nabla\sqrt{\rho_0}|^2-r_0\log_{-}\rho_0\right)\,dx+2E_0,
\end{split}
\end{equation}
where $\log_{-}g=\log\min(g,1);$
 the following inequality for any weak solution $(\rho,\u)$
\begin{equation}
\label{J inequality for weak solutions}
\kappa^{\frac{1}{2}}\|\sqrt{\rho}\|_{L^2(0,T;H^2(\O))}+\kappa^{\frac{1}{4}}\|\nabla\rho^{\frac{1}{4}}\|_{L^4(0,T;L^{4}(\O))}\leq C,
\end{equation}
where $C$  only depends on the initial data.

 Moreover, the weak solution $(\rho,\u)$ has the following properties
\begin{equation}
\begin{split}
\label{property}
&\rho\u\in C([0,T];L^{\frac{3}{2}}_{weak}(\O)),\quad(\sqrt{\rho})_t\in L^2((0,T)\times\O);
\end{split}
\end{equation}

If we use $(\rho_{\kappa},\u_{\kappa})$ to denote the weak solution for $\kappa>0$, then
\begin{equation}
\begin{split}
\label{property-2}
&\sqrt{\rho_{\kappa}}\u_{\kappa}\to \sqrt{\rho}\u\,\,\,\,\text{strongly in } L^2((0,T)\times\O),\;\;\text{as } \;\kappa\to0,
\end{split}
\end{equation}
where $(\rho,\u)$ in \eqref{property-2} is a weak solution to \eqref{NSK} and \eqref{initial data} for $\kappa=0.$
\end{Proposition}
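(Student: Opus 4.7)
The plan is to follow the multi-layer approximation strategy of \cite{VY-1}, constructing smooth solutions to a regularized system and passing to the limit in successive parameters. At the innermost level, one combines a Faedo--Galerkin discretization of the momentum equation with an artificial mass diffusion $\varepsilon\D\rho$ in the continuity equation (keeping $\rho$ bounded away from zero), a higher-order capillary-type regularization to compensate the degeneracy of $\rho\mathbb{D}\u$ near vacuum, and lower-order frictional terms that are consistent with both the energy and the BD entropy at the formal level. A key reformulation is Jungel's effective velocity $\w=\u+\na\log\rho$, which recasts the Bohm potential $\kappa\rho\na(\D\sqrt{\rho}/\sqrt{\rho})$ as a lower-order expression compatible with the BD structure, essentially reducing the quantum-viscous system to a damped viscous Euler-type system.

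The estimates \eqref{energy inequality for approximation} and \eqref{BD entropy for approximation} are then obtained at each approximate level by testing the momentum equation against $\u$ and against $\u+\na\log\rho$ respectively: the drag terms produce the additional $r_0\int|\u|^2$ and $r_1\int\rho|\u|^4$ controls in the energy, the $-r_0\log\rho$ contribution to the BD entropy comes from the drag tested against $\na\log\rho$ (absorbed by the initial $-r_0\int\log_{-}\rho_0$), and the Bohm correction yields the $\kappa\int\rho|\na^2\log\rho|^2$ dissipation. For the Jungel bound \eqref{J inequality for weak solutions}, the algebraic identity $\rho\na^2\log\rho=2\sqrt{\rho}\na^2\sqrt{\rho}-2\na\sqrt{\rho}\otimes\na\sqrt{\rho}$, combined with the BD control of $\kappa\int\rho|\na^2\log\rho|^2$ and the $L^\infty_tL^2_x$ bound on $\na\sqrt{\rho}$, yields the $L^2_tH^2_x$ estimate on $\sqrt{\rho}$ with norm of order $\kappa^{-1/2}$; the $L^4_{t,x}$ bound on $\na\rho^{1/4}$ then follows by an interpolation argument based on $|\na\rho^{1/4}|^4\lesssim|\na\sqrt{\rho}|^2|\na^2\sqrt{\rho}|^2/\rho$.

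For the strong convergence \eqref{property-2} as $\kappa\to 0$, the cubic drag $r_1$ is the crucial ingredient: it provides a uniform-in-$\kappa$ bound on $\sqrt{\rho_\kappa}\u_\kappa$ in $L^4((0,T)\times\O)$, which combined with the BD control of $\na\sqrt{\rho_\kappa}$ and the continuity equation (giving $(\sqrt{\rho_\kappa})_t$ uniformly in $L^2$) permits Aubin--Lions compactness of $\sqrt{\rho_\kappa}$ in $L^2_{t,x}$. One then passes to the limit in $\rho_\kappa\u_\kappa=\sqrt{\rho_\kappa}\cdot\sqrt{\rho_\kappa}\u_\kappa$ and identifies the limit product. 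The hard part is precisely this $\kappa\to 0$ passage in the quantum term $\kappa\rho_\kappa\na(\D\sqrt{\rho_\kappa}/\sqrt{\rho_\kappa})$: although the prefactor is small, the Bohm expression is only controlled in weak topologies via \eqref{J inequality for weak solutions}, and the scalings $\kappa^{1/2}$, $\kappa^{1/4}$ there are just strong enough to make the term vanish as a distribution, provided one first rewrites it in divergence form, e.g.\ as $\Dv\bigl(\kappa(\sqrt{\rho}\na^2\sqrt{\rho}-\na\sqrt{\rho}\otimes\na\sqrt{\rho})\bigr)$, before passing to the limit.
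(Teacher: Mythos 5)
Proposition \ref{pro weak solutions} is not proved in this paper: it is recalled verbatim from \cite{VY-1}, as the sentence preceding it makes explicit (``we need to recall the following existence result in \cite{VY-1}''). So there is no internal proof to compare against; the only meaningful comparison is with what the paper attributes to \cite{VY-1} and \cite{J} in its introduction and remarks, and your outline is broadly consistent with that account: Jungel's effective-velocity reformulation $\w=\u+\na\log\rho$ to tame the Bohm term, energy and BD entropy obtained by testing against $\u$ and $\w$, the drag terms supplying the extra controls and compactness, and \eqref{J inequality for weak solutions} flowing from the $\kappa\int\rho|\na^2\log\rho|^2$ dissipation.

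Two technical points, however, are off. First, the pointwise inequality you invoke, $|\na\rho^{1/4}|^4\lesssim|\na\sqrt{\rho}|^2|\na^2\sqrt{\rho}|^2/\rho$, is false: since $\na\rho^{1/4}=\tfrac{1}{2}\rho^{-1/4}\na\sqrt{\rho}$, the left side equals $\tfrac{1}{16}|\na\sqrt{\rho}|^4/\rho$, so your claim would force $|\na\sqrt{\rho}|\lesssim|\na^2\sqrt{\rho}|$ pointwise, which cannot hold. The $L^4$ bound on $\na\rho^{1/4}$ is a \emph{functional} (not pointwise) consequence of the BD dissipation, obtained by integration by parts in the form $\int_\O|\na\rho^{1/4}|^4\,dx+\int_\O|\na^2\sqrt{\rho}|^2\,dx\lesssim\int_\O\rho|\na^2\log\rho|^2\,dx$; this is the Jungel inequality acknowledged in Remark 1.3. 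Second, your $\kappa\to0$ step as written produces identification of the limit $\rho\u$, not the strong $L^2$ convergence of $\sqrt{\rho_\kappa}\u_\kappa$ asserted in \eqref{property-2}. To reach the latter one must combine a.e.\ convergence of $\u_\kappa$ on $\{\rho>0\}$ (from strong convergence of $\rho_\kappa$ and $\rho_\kappa\u_\kappa$) with the equi-integrability of $\rho_\kappa|\u_\kappa|^2$ that the uniform $r_1\int\rho_\kappa|\u_\kappa|^4$ bound provides, and then apply Vitali's theorem; Aubin--Lions compactness of $\sqrt{\rho_\kappa}$ alone does not close the argument.
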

\begin{Remark}
The energy inequality \eqref{energy inequality for approximation} yields the following estimates
\begin{equation}
\begin{split}
\label{a priori estimate from energy}
&\|\sqrt{\rho}\u\|_{L^{\infty}(0,T;L^2(\O))}\leq E_0<\infty,\\
&\|\rho\|_{L^{\infty}(0,T;L^{\gamma}(\O))}\leq E_0<\infty,\\
&\|\sqrt{\kappa}\nabla\sqrt{\rho}\|_{L^{\infty}(0,T;L^2(\O))}\leq E_0<\infty,\\
&\|\sqrt{\rho}\mathbb{D}\u\|_{L^{2}(0,T;L^2(\O))}\leq E_0<\infty,\\
&\|\sqrt{r_0}\u\|_{L^{2}(0,T;L^{2}(\O))}\leq E_0<\infty,\\
&\|\sqrt[4]{r_1\rho}\u\|_{L^4(0,T;L^4(\O))}\leq E_0<\infty.
\end{split}
\end{equation}
The BD entropy \eqref{BD entropy for approximation} yields the following bounds on the density $\rho$:
\begin{equation}
\label{estimate of sqrt density}
\|\nabla\sqrt{\rho}\|_{L^{\infty}(0,T;L^{2}(\O))}\leq C<\infty,
\end{equation}
\begin{equation}
\label{estimate of second derivative density}
\|\sqrt{\kappa\rho}\nabla^2\log\rho\|_{L^2(0,T;L^2(\O))}\leq C<\infty,
\end{equation}
\begin{equation}
\label{estimate for presure}
\|\nabla\rho^{\frac{\gamma}{2}}\|_{L^{2}(0,T;L^{2}(\O))}\leq C<\infty,
\end{equation}
and
\begin{equation}
\label{estimate on u}
\|\sqrt{\rho}\nabla\u\|^2_{L^2(0,T;L^2(\O))}\leq\int_{\O}\left(\rho_0|\u_0|^2+\frac{\rho^{\gamma}_0}{\gamma-1}+|\nabla\sqrt{\rho_0}|^2-r_0\log_{-}\rho_0\right)\,dx+2 E_0<\infty,
\end{equation}
where $C$ is bounded by the initial data, uniformly on $r_0$, $r_1$ and $\kappa$.\\
In fact, \eqref{estimate of sqrt density} yields
\begin{equation}
\label{estimate on density in Lp}
\sqrt{\rho}\in L^{\infty}(0,T;L^6(\O)),
\end{equation}
in three dimensional space.
\end{Remark}
\begin{Remark} Inequality
\eqref{J inequality for weak solutions} is a consequence of the bound on \eqref{estimate of second derivative density}. This was used already in \cite{J}. The estimate for the full system \eqref{NSK} is proved in \cite{VY-1}.
\end{Remark}

\begin{Remark}
The existence result of \cite{BD} contained the case
 with $\kappa=0$,  which can be obtained as the limit when $\kappa>0$ goes to 0 in (\ref{NSK}), by  standard compactness analysis.
\end{Remark}
\begin{Remark}
\label{remark on weak formulation} The weak formulation reads as
\begin{equation}
\begin{split}
\label{weak formulation of NSK}
&\int_{\O}\rho\u\cdot\psi\,dx|_{t=0}^{t=T}-\int_{0}^{T}\int_{\O}\rho\u\psi_t\,dx\,dt
-\int_{0}^{T}\int_{\O}\rho\u\otimes\u:\nabla \psi\,dx\,dt\\&-\int_{0}^{T}\int_{\O}\rho^{\gamma}\Dv\psi\,dx\,dt
-\int_0^{T}\int_{\O}\rho\mathbb{D}\u:\nabla\psi\,dx\,dt
\\& =-r_0\int_{0}^{T}\int_{\O}\u\psi\,dx\,dt-r_1\int_0^T\int_{\O}\rho|\u|^2\u\psi\,dx\,dt-2\kappa\int_0^T\int_{\O}\D\sqrt{\rho}\nabla\sqrt{\rho}\psi\;dx\;dt
\\&-\kappa\int_0^T\int_{\O}\D\sqrt{\rho}\sqrt{\rho}\Dv\psi\,dx\,dt.
\end{split}
\end{equation}
for any test function $\psi.$
\end{Remark}

Our first main result reads as follows:
\begin{Theorem}
\label{MV inequality}
For any  $\delta\in(0,2)$, there exists a constant $C$ depending only on $\delta$, such that the following holds true.
There exists  a weak solution $(\rho,\u)$ to \eqref{NSK} with $\kappa=0$  verifying all the properties of  Proposition \ref{pro weak solutions}, and  satisfying the following Mellet-Vasseur type inequality for every $T>0$, and almost every $t<T$:
\begin{equation*}
\begin{split}
&\int_{\O} \rho(t,x)(1+|\u(t,x)|^2)\ln(1+|\u(t,x)|^2)\,dx
\\&\leq  \int_{\O}\rho_0(1+|\u_0|^2)\ln(1+|\u_0|^2)\,dx+8\int_{\O}\left(\rho_0|\u_0|^2+\frac{\rho^{\gamma}_0}{\gamma-1}+|\nabla\sqrt{\rho_0}|^2-r_0\log_{-}\rho_0\right)\,dx
\\&+16 E_0
+C\int_0^T \left(\int_{\O}(\rho^{2\gamma-1-\frac{\delta}{2}})^{\frac{2}{2-\delta}}\right)^{\frac{2-\delta}{2}}\left(\int_{\O}\rho(2+\ln(1+|\u|^2))^{\frac{2}{\delta}}\,dx\right)^{\frac{\delta}{2}}\,dt,
\end{split}
\end{equation*}
where $\gamma>1$ in two dimensional space and $1<\gamma<3$ in three dimensional space.
\end{Theorem}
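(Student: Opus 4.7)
The plan is to formally multiply the momentum equation in Proposition \ref{pro weak solutions} by the test function $\psi = 2\u\,\varphi'(|\u|^2)$ with $\varphi(s)=(1+s)\ln(1+s)$ (so $\varphi'(s)=1+\ln(1+s)$), combine with the continuity equation, and integrate in space to obtain an evolution identity for $\int_{\O}\rho\varphi(|\u|^2)\,dx$. This is exactly the formal Mellet--Vasseur scheme of \cite{MV}. The serious issue is that $\u$ is neither smooth nor defined on the vacuum set $\{\rho=0\}$, so one cannot use $\psi$ directly in \eqref{weak formulation of NSK}. I would instead carry the whole computation at positive $\kappa,r_0,r_1$, replace $\varphi$ by a smooth compactly supported approximation $\varphi_n$, mollify $\u$ in space, truncate in a neighbourhood of the vacuum, and justify the limits via a DiPerna--Lions commutator estimate; the extra regularity $\kappa^{1/2}\|\sqrt\rho\|_{L^2 H^2}+\kappa^{1/4}\|\nabla\rho^{1/4}\|_{L^4 L^4}\le C$ in \eqref{J inequality for weak solutions} is what makes the truncation argument feasible at fixed $\kappa>0$.

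Once the renormalised identity is in hand, the viscous contribution $-2\int\rho\,\mathbb D\u:\nabla(\u\varphi'(|\u|^2))\,dx$ splits into the absorbing part $-2\int\rho\varphi'(|\u|^2)|\mathbb D\u|^2\,dx$ together with a cross piece involving $\varphi''(|\u|^2)$, which is controlled by Cauchy--Schwarz together with $\sqrt\rho\nabla\u\in L^2_{t,x}$ (following from \eqref{estimate on u} and the identity $|\nabla\u|^2=|\mathbb D\u|^2+\tfrac14|\nabla\u-\nabla^T\u|^2$ combined with the BD entropy). The drag terms have the favourable sign and may be discarded; the capillarity term is handled using \eqref{J inequality for weak solutions} and vanishes as $\kappa\to0$. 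For the pressure, integration by parts and the pointwise bound $|\u\cdot\nabla\varphi'(|\u|^2)|\le2|\nabla\u|$ give
$$\Bigl|2\int_{\O}\rho^\gamma\Dv(\u\varphi'(|\u|^2))\,dx\Bigr|\le C\int_{\O}\rho^\gamma|\nabla\u|(2+\ln(1+|\u|^2))\,dx,$$
which by Cauchy--Schwarz is bounded by $C\bigl(\int\rho|\mathbb D\u|^2(2+\ln(1+|\u|^2))\bigr)^{1/2}\bigl(\int\rho^{2\gamma-1}(2+\ln(1+|\u|^2))\bigr)^{1/2}$. The first factor is absorbed into the viscous term on the left using $\varphi'(s)\ge\tfrac12(2+\ln(1+s))$ and Young's inequality. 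The second factor is split by Hölder with conjugate exponents $2/(2-\delta)$ and $2/\delta$ applied to the decomposition $\rho^{2\gamma-1}=\rho^{2\gamma-1-\delta/2}\cdot\rho^{\delta/2}$, producing exactly the expression on the right-hand side of the theorem.

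Finally, after integrating in time and bounding the initial contributions through \eqref{a priori estimate from energy}--\eqref{estimate on u}, one sends $r_0,r_1\to 0$ (trivial by the sign of those terms), then $\kappa\to 0$ by invoking the strong convergence \eqref{property-2} of $\sqrt{\rho_\kappa}\u_\kappa$ in $L^2_{t,x}$ to pass to the limit in the nonlinear density $\rho\varphi(|\u|^2)$. A monotone approximation $\varphi_n\uparrow(1+s)\ln(1+s)$ together with Fatou on the left and dominated convergence on the right yields the stated inequality. The \emph{main obstacle} I expect is the very first step: since the introduction stresses that the Mellet--Vasseur inequality is \emph{not} verified at the level of the approximation scheme, the renormalised identity cannot be obtained simply by smoothing the solutions and passing to the limit; instead it must be derived directly on the weak solutions by mollifying $\u$ against a truncation away from $\{\rho=0\}$ and showing that the commutator arising from the convective nonlinearity $\Dv(\rho\u\otimes\u)$ tested against $\varphi'(|\u|^2)\u$ tends to zero, and it is precisely the additional regularity in \eqref{J inequality for weak solutions} that powers this step. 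The dimensional restriction $1<\gamma<3$ in three dimensions enters through the need for the exponent $(2\gamma-1-\delta/2)\cdot 2/(2-\delta)$ to be admissible with the $L^\infty_t L^6_x$ control of $\sqrt\rho$ from \eqref{estimate on density in Lp}, for some $\delta\in(0,2)$.
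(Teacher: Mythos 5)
Your overall scheme---test against a truncated $\varphi'_n(\phi(\rho)\u)$, justify the renormalised identity via a mollification/commutator (Lions--DiPerna) argument powered by \eqref{J inequality for weak solutions}, absorb the cross piece into the dissipation, and split the pressure by H\"older with exponents $2/(2-\delta)$ and $2/\delta$---is indeed the paper's strategy. But there is a missing technical ingredient you cannot do without: you only truncate near the vacuum, whereas the paper uses the product cutoff $\phi(\rho)=\phi_m(\rho)\phi_K(\rho)$, with $\phi_K$ also cutting off large densities. The high-density cutoff is not cosmetic. To carry out the integration by parts on the viscous term in the commutator step one needs $\mathbf{S}=\rho\phi(\rho)(\mathbb{D}\u+\kappa\Delta\sqrt\rho/\sqrt\rho\,\mathbb{I})\in L^2_{t,x}$, but $\rho\mathbb{D}\u=\sqrt\rho\cdot\sqrt\rho\mathbb{D}\u$ only lives in $L^2_tL^{3/2}_x$ (since $\sqrt\rho\in L^\infty_tL^6_x$), so without $\phi_K$ the renormalised identity you wish to derive on the weak solutions is not even available. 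Moreover, removing $\phi_K$ cannot be decoupled from removing $\kappa$: the boundary terms containing $\phi'_K$ come out of size roughly $\kappa^{-1/4}/\sqrt K$ because of the $\kappa^{1/4}\|\nabla\rho^{1/4}\|_{L^4}$ scaling, and the paper therefore sets $K=\kappa^{-3/4}$ and passes to both limits simultaneously in Section 4. This coupled limit is the technical heart of the argument and is entirely absent from your plan.

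Your endgame is also off. Theorem \ref{MV inequality} concerns a weak solution to \eqref{NSK} at fixed $r_0,r_1\ge0$ with $\kappa=0$ (note $-r_0\log_-\rho_0$ appears on the right-hand side), so in its proof the drag parameters are held fixed; only $m\to\infty$, then $(\kappa,K)\to(0,\infty)$ coupled, then $n\to\infty$ are taken. The limit $r_0,r_1\to0$ is performed only later, in the proof of Theorem \ref{main result}, and there it is decidedly not ``trivial'': once the drag terms are removed one needs precisely the Mellet--Vasseur bound to recover strong $L^2$-convergence of $\sqrt{\rho_r}\u_r$, so taking $r_0,r_1\to0$ before the inequality is established is circular. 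A smaller inaccuracy: the 3D restriction $1<\gamma<3$ does not come from $\sqrt\rho\in L^\infty_tL^6_x$ alone, but from interpolating $\rho^\gamma\in L^\infty_tL^1_x\cap L^1_tL^3_x$ (the latter from $\nabla\rho^{\gamma/2}\in L^2_{t,x}$ and Sobolev), giving $\rho^\gamma\in L^{5/3}_{t,x}$, hence $\rho\in L^{5\gamma/3}_{t,x}$; uniform integrability of $\rho^{2\gamma-1}$ as $\kappa\to0$ then requires $2\gamma-1<5\gamma/3$, i.e.\ $\gamma<3$.
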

\begin{Remark} The right hand side of the above inequality can be bounded by the initial data. In particular, it does not depend on $r_0$ and $r_1$.
This theorem will yield the strong convergence of $\sqrt{\rho}\u$ in space $L^2(0,T;\O)$ when $r_0$, $r_1$ converge to $0.$ It will be the key tool of obtaining the existence of weak solutions, in \cite{MV}.
\end{Remark}

We define the weak solution $(\rho,\u)$ to the initial value problem \eqref{NS equation} in the following sense: for any $t\in[0,T]$,
\begin{itemize}
\item  \eqref{initial data} holds in $\mathcal{D'}(\O)$,
\item \eqref{energy inequality for NS} holds for almost every $t\in[0,T]$,
\item \eqref{NS equation} holds in $\mathcal{D'}((0,T)\times\O))$ and the following is satisfied\\
$\rho\geq 0, \quad \rho \in L^{\infty}([0,T];L^{\gamma}(\O)),$
\\$\rho(1+|\u|^2)\ln (1+|\u|^2)\in L^{\infty}(0,T;L^1(\O)),$
\\$ \nabla\rho^{\frac{\gamma}{2}}\in L^2(0,T;L^2(\O)),\quad\nabla\sqrt{\rho}\in L^{\infty}(0,T;L^2(\O)),$\\
           $  \sqrt{\rho}\u \in L^{\infty}(0,T;L^2(\O)),\quad\sqrt{\rho}\nabla\u\in L^2(0,T;L^2(\O)).$
 \end{itemize}
 \begin{Remark}
The regularity $\nabla\sqrt{\rho}\in L^{\infty}(0,T;L^2(\O))$ and $\nabla\rho^{\frac{\gamma}{2}}\in L^2(0,T;L^2(\O))$ are
 from the Bresch-Desjardins entropy.
\end{Remark}
As a sequence of Theorem \ref{MV inequality}, our second main result reads as follows:
\begin{Theorem}
\label{main result}
Let $(\rho_0,\,\m_0)$ satisfy \eqref{initial condition} and
\begin{equation*}
\int_{\O}\rho_0(1+|\u_0|^2)\ln(1+|\u_0|^2)\,dx<\infty.
\end{equation*}
 Then,
for $\gamma>1$ in two dimensional space and $1<\gamma<3$ in three dimensional space, and any $T>0$, there exists a weak solution of \eqref{NS equation}-\eqref{initial data} on $(0,T).$
\end{Theorem}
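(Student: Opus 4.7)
The plan is to construct weak solutions to \eqref{NS equation}-\eqref{initial data} by a vanishing-parameter argument applied to the approximate system \eqref{NSK}, with the capillarity constant $\kappa$ and the drag coefficients $r_0, r_1$ sent to zero in succession. For each fixed $r_0, r_1 > 0$, Proposition \ref{pro weak solutions} together with the strong convergence \eqref{property-2} supplies a weak solution $(\rho^{r_0,r_1}, \u^{r_0,r_1})$ of \eqref{NSK} with $\kappa=0$, and Theorem \ref{MV inequality} endows this solution with the Mellet-Vasseur type inequality whose right-hand side is bounded purely in terms of the initial data, uniformly in $r_0$ and $r_1$. The remaining task is to pass to the limit $r_0, r_1 \to 0$ in the weak formulation \eqref{weak formulation of NSK}.

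First I would collect the $(r_0,r_1)$-independent bounds from \eqref{a priori estimate from energy}--\eqref{estimate on density in Lp} and the BD entropy: $\sqrt{\rho}\,\u$ in $L^\infty(0,T;L^2(\O))$, $\rho$ in $L^\infty(0,T;L^\gamma(\O))$, $\nabla\sqrt{\rho}$ in $L^\infty(0,T;L^2(\O))$, $\nabla \rho^{\gamma/2}$ in $L^2((0,T)\times\O)$, and $\sqrt{\rho}\,\mathbb{D}\u$ in $L^2((0,T)\times\O)$. Combined with the continuity equation, these bounds control $\partial_t \sqrt{\rho}$ in $L^2$ (cf.\ \eqref{property}) and yield, via Aubin-Lions, strong compactness of $\sqrt{\rho^{r_0,r_1}}$ in $C([0,T]; L^p(\O))$ for $p$ in a suitable range. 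Hence $\rho^{r_0,r_1} \to \rho$ strongly, and $(\rho^{r_0,r_1})^\gamma$ converges strongly in $L^1$ as well (the hypothesis $\gamma<3$ in dimension three is used here to secure enough integrability of the pressure through the Sobolev embedding of $\rho^{\gamma/2}\in L^2_t H^1_x$). The momentum $\rho^{r_0,r_1}\u^{r_0,r_1}$ converges weakly, and the drag contributions vanish in the sense of distributions because $\sqrt{r_0}\,\u^{r_0,r_1}$ is bounded in $L^2_{t,x}$ and $(r_1\rho^{r_0,r_1})^{1/4}\u^{r_0,r_1}$ is bounded in $L^4_{t,x}$.

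The hard part will be identifying the limit of the convective term $\rho\u\otimes\u=\sqrt{\rho}\u\otimes\sqrt{\rho}\u$, for which only weak $L^2$ convergence of $\sqrt{\rho^{r_0,r_1}}\u^{r_0,r_1}$ is a priori available. This is precisely where Theorem \ref{MV inequality} becomes indispensable. I would first check that its right-hand side is genuinely bounded uniformly in $r_0, r_1$ by an initial-data-dependent constant, closing a Gronwall loop: the factor involving $\rho^{2\gamma-1-\delta/2}$ is tamed using the improved spatial integrability of $\rho$ coming from $\nabla\rho^{\gamma/2}\in L^2_{t,x}$ combined with $\rho\in L^\infty_t L^\gamma_x$, while the factor $\rho(2+\ln(1+|\u|^2))^{2/\delta}$ is absorbed by observing that, for any fixed $\delta\in(0,2)$, $(\ln(1+|\u|^2))^{2/\delta}\le \varepsilon (1+|\u|^2)\ln(1+|\u|^2)+C_\varepsilon$, so that a small multiple of the Mellet-Vasseur quantity itself reappears on the right and can be absorbed. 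The restriction $\gamma<3$ in three dimensions is exactly what makes this loop close. The resulting uniform bound on $\rho^{r_0,r_1}(1+|\u^{r_0,r_1}|^2)\ln(1+|\u^{r_0,r_1}|^2)$ in $L^\infty(0,T;L^1(\O))$ furnishes, via the de la Vall\'ee-Poussin criterion, equi-integrability of $\rho^{r_0,r_1}|\u^{r_0,r_1}|^2$. Combined with almost-everywhere convergence of $\sqrt{\rho^{r_0,r_1}}\u^{r_0,r_1}$ on $\{\rho>0\}$ (obtained from the strong convergence of $\sqrt{\rho^{r_0,r_1}}$ and the weak convergence of the momentum, the set $\{\rho=0\}$ being handled trivially), Vitali's theorem upgrades this to strong convergence in $L^2((0,T)\times\O)$. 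Once this is in hand, the limit in \eqref{weak formulation of NSK} can be taken term by term, and the regularity demanded by the definition of weak solution above is inherited from the uniform bounds, completing the proof.
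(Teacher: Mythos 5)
Your proposal is correct and follows essentially the same route as the paper: pass to the limit $r_0, r_1 \to 0$ in the weak formulation for the $\kappa=0$ approximate system, using the uniform energy and BD-entropy bounds for compactness of density and pressure, Theorem \ref{MV inequality} to obtain the $L^\infty_t (L\log L)_x$ control on $\rho|\u|^2$ that upgrades $\sqrt{\rho_r}\u_r$ from weak to strong $L^2$ convergence (the paper defers the equi-integrability/Vitali mechanism and the absorption argument closing the right-hand side of the Mellet--Vasseur inequality to \cite{MV}, which you reconstruct correctly), and the drag bounds from \eqref{energy inequality for approximation} to kill the $r_0\u$ and $r_1\rho|\u|^2\u$ terms. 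The only point worth flagging is that Theorem \ref{MV inequality} asserts the existence of a weak solution satisfying the inequality (not that every weak solution does), so one must work with the solutions constructed there; and the paper also needs, as noted in its final line, the extra step of removing the temporary assumption $\rho_0\geq 1/m_0$ via \cite{MV}.
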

\vskip0.3cm
We cannot obtained directly the estimate of Theorem \ref{MV inequality}  from (\ref{NSK}) with $\kappa=0$, because we do not have enough regularity on the solutions. But, the estimate is not true for the solutions of (\ref{NSK}) for $\kappa>0$. The idea is to obtain a control on
$$
\int_{\Omega}\rho(t,x)\varphi_n(\phi(\rho)\u(t,x))\,dx
$$
at the level $\kappa>0$, for a $\varphi_n$, suitable bounded approximation of $(1+|\u|^2)\ln(1+|\u|^2)$, and a suitable cut-off function $\phi$ of $\rho$, controlling both the large and small $\rho$.
The first step  (see section 2) consists in showing that we can control (uniformly with respect to $\kappa$) this quantity, for any {\it{weak}} solutions of  (\ref{NSK}) with $\kappa>0$. This has to be done in several steps, taking into account the minimal regularity of the solutions, the weak control of the solutions close to the vacuum, and the extra capillarity higher order terms. In the limit $\kappa$ goes to zero, the cut-off function $\phi$ has to converge to  one in a special rate associated to $\kappa$ (see section 3 and 4).  This provides,  for any  weak limit of (\ref{NSK}) obtained by limit $\kappa$ converges to 0,  a (uniform in $n$, $r_0$, and $r_1$) bound,  to:
$$
\int_{\Omega}\rho(t,x)\varphi_n(\phi(\rho)\u(t,x))\,dx.
$$
Note that the bound is not uniform in $n$, for $\kappa$ fixed. However, it becomes uniform in $n$ at the limit $\kappa$ converges to 0. In section 5, we pass into the limit $n$ goes to infinity, obtaining  a uniform bound with respect to $r_0$ and $r_1$ of
$$
\int_{\Omega}\rho(t,x)(1+|\u(t,x)|^2)\ln (1+|\u(t,x)|^2)\,dx.
$$
Section 6 is devoted to the limit $r_1$ and $r_0$ converges to 0. The uniform estimate above provides the strong convergence of $\sqrt{\rho}\u$ needed to obtain the existence of global weak solutions to
\eqref{NS equation} with large initial data.

\bigskip

\bigskip

\section{Approximation of the Mellet-Vasseur type inequality}
In this section, we construct an approximation of the Mellet-Vasseur type inequality for any weak solutions at the following level of approximation system
\begin{equation}
\label{last level}
\begin{split}
&\rho_t+\Dv(\rho\u)=0,
\\ &(\rho\u)_t+\Dv(\rho\u\otimes\u)+\nabla\rho^{\gamma}-\Dv(\rho\mathbb{D}\u)=-r_0\u-r_1\rho|\u|^2\u+\kappa\rho\nabla(\frac{\Delta\sqrt{\rho}}{\sqrt{\rho}}),
\end{split}
\end{equation}
with the initial data \eqref{initial condition}, verifying in addition that $\rho_0\geq \frac{1}{m_0}$ for $m_0>0$ and $\sqrt{\rho_0}\u_0\in L^{\infty}(\O).$ This restriction on the initial data will be
useful later to get the strong convergence of $\sqrt{\rho}\u$ when $t$ converges to $0$. This restriction will be cancel at the very end, (see section 6).


In the same line of Bresch-Desjardins \cite{BD,BD2006,J}, we constructed the weak solutions to the system \eqref{NSK} for any $\kappa\geq0$  by the natural energy estimates and the Bresch-Desjardins entropy, see \cite{VY-1}.
 The term $r_0\u$ turns out to be essential to show the strong convergence of $\sqrt{\rho}\u$ in $L^2(0,T;L^2(\O))$.
 Unfortunately, it is not enough to ensure the strong convergence of $\sqrt{\rho}\u$ in $L^2(0,T;L^2(\O))$ when $r_0$ and $r_1$ vanish.
\vskip0.3cm

We define two $C^\infty$, nonnegative  cut-off functions $\phi_m$ and $\phi_K$ as follows.
\begin{equation}
\label{cut function}
\phi_m(\rho) =1\,\text{ for any } \rho>\frac{1}{m},\;\; \;\phi_m(\rho) =0\,\text{ for any } \rho<\frac{1}{2m},
\end{equation}
where $m>0$ is any real number, and $|\phi'_m|\leq 2m$;\newline
and $\phi_K(\rho)\in C^{\infty}(\R)$ is  a  nonnegative function such that
\begin{equation}
\label{cut function}
\phi_K(\rho) =1\,\text{ for any } \rho<K,\;\; \;\phi_K(\rho) =0\,\text{ for any } \rho>2K,
\end{equation}
where $K>0$ is any real number, and $|\phi'_K|\leq \frac{2}{K}$.
\vskip0.3cm
We define  $\v=\phi(\rho)\u$, and $\phi(\rho)=\phi_m(\rho)\phi_K(\rho).$ The following Lemma will be very useful to construct the approximation of the Mellet-Vasseur type inequality. The structure of the $\kappa$ quantum term in \cite{J} is essential to get this lemma in 3D. It seems not possible to get it from the  Korteweg term of BD \cite{BD} in 3D.
\begin{Lemma}
\label{lemma of estimates for positive kappa} For any fixed $\kappa>0$, we have
\begin{equation*}
\|\nabla\v\|_{L^2(0,T;L^2(\O))}\leq C,
\end{equation*}
where the constant $C$ depend on $\kappa>0,$ $r_1$, $K$ and $m$;
and
\begin{equation*}
\rho_t\in L^4(0,T;L^{6/5}(\O))+L^2(0,T;L^{3/2}(\O)) \quad\text{ uniformly in } \kappa.
\end{equation*}
\end{Lemma}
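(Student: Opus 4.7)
The plan is to differentiate $\v=\phi(\rho)\u$ via the product rule, estimate each piece using the a priori bounds from Proposition \ref{pro weak solutions}, and then read $\rho_t$ directly off the continuity equation. The main obstacle, as explained at the end, is closing the $\nabla\v$ bound in $L^2(0,T;L^2(\O))$; this forces the simultaneous use of the drag bound $r_1>0$ and the capillary bound $\kappa>0$, and is precisely what makes the constant depend on $\kappa,r_1,K,m$.

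For the gradient estimate, expand $\nabla\v=\phi(\rho)\nabla\u+\phi'(\rho)\nabla\rho\otimes\u$. For the first piece, $\phi$ vanishes on $\{\rho<1/(2m)\}$, so $\phi(\rho)\leq\sqrt{2m}\,\sqrt{\rho}$ and thus $\|\phi(\rho)\nabla\u\|_{L^2(0,T;L^2(\O))}\leq\sqrt{2m}\,\|\sqrt{\rho}\nabla\u\|_{L^2(0,T;L^2(\O))}$, controlled by \eqref{estimate on u}. For the second, $\phi'$ is supported where $\rho\in[\tfrac{1}{2m},\tfrac{1}{m}]\cup[K,2K]$, so $|\phi'(\rho)|$ and $\sqrt{\rho}$ are bounded by $C(m,K)$ there, and with $\nabla\rho=2\sqrt{\rho}\nabla\sqrt{\rho}$ the task reduces to bounding $\nabla\sqrt{\rho}\cdot\u$ in $L^2(0,T;L^2(\O))$ on that set. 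I would raise each factor to $L^4(0,T;L^4(\O))$: the drag bound $\sqrt[4]{r_1\rho}\,\u\in L^4(0,T;L^4(\O))$ from \eqref{a priori estimate from energy}, together with $\rho^{1/4}\geq(2m)^{-1/4}$ on the support, gives $\u\in L^4(0,T;L^4(\O))$ there (constant depending on $r_1,m$); and the capillary bound $\kappa^{1/4}\nabla\rho^{1/4}\in L^4(0,T;L^4(\O))$ from \eqref{J inequality for weak solutions}, combined with the identity $\nabla\sqrt{\rho}=2\rho^{1/4}\nabla\rho^{1/4}$ and $\rho^{1/4}\leq(2K)^{1/4}$ on the support, places $\nabla\sqrt{\rho}\in L^4(0,T;L^4(\O))$ there (constant depending on $\kappa,K$). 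H\"older then closes the bound.

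For the $\rho_t$ estimate, use $\rho_t=-\nabla\rho\cdot\u-\rho\,\Dv\u$. For $\rho\,\Dv\u=\sqrt{\rho}\,(\sqrt{\rho}\,\Dv\u)$, the BD bound $\nabla\sqrt{\rho}\in L^\infty(0,T;L^2(\O))$ with Sobolev in 3D gives $\sqrt{\rho}\in L^\infty(0,T;L^6(\O))$ uniformly in $\kappa$, while $\sqrt{\rho}\,\Dv\u\in L^2(0,T;L^2(\O))$ by \eqref{estimate on u}; H\"older puts this term in $L^2(0,T;L^{3/2}(\O))$ uniformly in $\kappa$. For $\nabla\rho\cdot\u=2\nabla\sqrt{\rho}\cdot(\sqrt{\rho}\u)$, write $\sqrt{\rho}\u=\rho^{1/4}(\rho^{1/4}\u)$ and combine $\rho^{1/4}\in L^\infty(0,T;L^{12}(\O))$ (from $\sqrt{\rho}\in L^\infty(0,T;L^6(\O))$) with the drag bound $\rho^{1/4}\u\in L^4(0,T;L^4(\O))$ to obtain $\sqrt{\rho}\u\in L^4(0,T;L^3(\O))$; H\"older against $\nabla\sqrt{\rho}\in L^\infty(0,T;L^2(\O))$ places this term in $L^4(0,T;L^{6/5}(\O))$ uniformly in $\kappa$.

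The main obstacle is precisely the $L^2L^2$ bound on $\phi'(\rho)\nabla\rho\otimes\u$: using only the energy and BD bounds one reaches at most $L^\infty(0,T;L^1(\O))$ (both factors sit in $L^\infty L^2$), and even exploiting the $\kappa$-dependent $L^2(0,T;L^6(\O))$ bound on $\nabla\sqrt{\rho}$ one obtains only $L^2(0,T;L^{3/2}(\O))$. Lifting both factors simultaneously to $L^4L^4$ via the drag and capillary terms respectively is essential; this matches the stated dependence of the constant on $\kappa,r_1,K,m$, and, in contrast, the $\rho_t$ bound needs only BD and the drag and thus stays uniform in $\kappa$.
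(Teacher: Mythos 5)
Your proof is correct and follows essentially the same route as the paper: expand $\nabla\v$ by the product rule, control $\phi(\rho)\nabla\u$ through $\sqrt{\rho}\nabla\u\in L^2L^2$, and close the cross term by pairing the drag-induced $L^4L^4$ bound on $\rho^{1/4}\u$ with the capillarity-induced $L^4L^4$ bound on $\nabla\rho^{1/4}$; the $\rho_t$ decomposition into $L^4L^{6/5}+L^2L^{3/2}$ matches the paper's term by term. The only cosmetic difference is that the paper absorbs the spare $\rho^{3/4}$ factor into $\phi'(\rho)\sqrt{\rho}$ (bounded for all $\rho$) and pairs $\rho^{1/4}\u$ with $\nabla\rho^{1/4}$ directly, whereas you keep $\u$ and $\nabla\sqrt{\rho}$ separate and invoke the support of $\phi'$ to supply the needed powers of $\rho$ from above and below; both reach the same Hölder estimate.
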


\begin{proof} By \eqref{J inequality for weak solutions}, we have
$$\|\nabla\rho^{\frac{1}{4}}\|_{L^4(0,T;L^4(\O))}\leq C.$$
For $\v$, we have
\begin{equation*}
\nabla\v=\nabla(\phi(\rho)\u)=(\phi'(\rho)\nabla\rho)\u+\phi(\rho)\nabla\u,
\end{equation*}
and hence
\begin{equation*}
\begin{split}
&\|(\phi'(\rho)\nabla\rho)\u+\phi(\rho)\nabla\u\|_{L^2(0,T;L^2(\O))}
\\&\leq C\|\rho^{\frac{1}{4}}\u\nabla\rho^{\frac{1}{4}}\|_{L^2(0,T;L^2(\O))}+C\|\sqrt{\rho}\nabla\u\|_{L^2(0,T;L^2(\O))}
\\&\leq C\|\rho^{\frac{1}{4}}\u\|_{L^4(0,T;L^4(\O))}\|\nabla\rho^{\frac{1}{4}}\|_{L^4(0,T;L^4(\O))}+C\|\sqrt{\rho}\nabla\u\|_{L^2(0,T;L^2(\O))},
\end{split}
\end{equation*}
where we used the definition of the function $\phi(\rho)$. Indeed, there exists $C>0$ such that
\begin{equation*}
\left|\phi'(\rho)\sqrt{\rho}\right|+\left|\frac{\phi(\rho)}{\sqrt{\rho}}\right|\leq C
\end{equation*}
for any $\rho>0.$\\
For $\rho_t$, we have
\begin{equation*}
\begin{split}
&\rho_t=-\nabla\rho\cdot\u-\rho\Dv\u
\\&=-2\nabla\sqrt{\rho}\cdot\rho^{\frac{1}{4}}\u\rho^{\frac{1}{4}}-\sqrt{\rho}\sqrt{\rho}\Dv\u=S_1+S_2.
\end{split}
\end{equation*}

Thanks to \eqref{a priori estimate from energy}, \eqref{estimate of sqrt density} and \eqref{estimate on density in Lp}, we have $$S_1\in L^4(0,T;L^{r}(\O)) \;\;\text{ for } 1\leq r\leq\frac{6}{5}.$$
By \eqref{a priori estimate from energy} and \eqref{estimate on density in Lp}, we have
$$S_2\in L^2(0,T;L^s(\O))\;\;\text{for } 1\leq s\leq \frac{3}{2}.$$
Thus, we have
\begin{equation*}
\rho_t\in L^4(0,T;L^{r}(\O))+L^2(0,T;L^s(\O)).
\end{equation*}

\end{proof}
We introduce a new $C^1(\R^3)$, nonnegative  cut-off function $\varphi_n$:
\begin{equation}
\label{relation}
\varphi_n(\u)=\tilde{\varphi}_n(|\u|^2),
\end{equation}
where  $\tilde{\varphi}_n$ is given on $\R^+$ by
\begin{equation}
\label{second deritative-1}
\tilde{\varphi}_n''(y)\begin{cases}= \frac{1}{1+y} \;\;\;\;\;\quad\quad\quad\quad\text{ if }0\leq y\leq n,
\\ =-\frac{1}{1+y}\;\;\;\;\;\;\;\;\;\text{ if } n< y<C_n
\\ =0\,\;\;\;\;\quad\quad\;\quad\quad\quad\text{ if }  y\geq C_n,
\end{cases}\end{equation}
with $\tilde{\varphi}'_n(0)=0$, $\tilde{\varphi}_n(0)=0,$ and $C_n=e(1+n)^2-1$.

Here we gather the properties of the function $\tilde{\varphi}_n$ in the following Lemma:
\begin{Lemma}
\label{Lemma for the cut function}
Let $\varphi_n$ and $\tilde{\varphi}_n$ be defined as above. Then they verify
\begin{itemize}
\item a.
For any $\u\in \R^3$, we have \begin{equation}
\label{second deritative}\varphi_n''(\u)=2\left(2\tilde{\varphi}_n''(|\u|^2)\u\otimes\u+\mathbf{I} \tilde{\varphi}_n'(|\u|^2)\right),
\end{equation}
where $\mathbf{I}$ is $3\times 3$ identity matrix.

\item b.
$|\tilde{\varphi}''_n(y)|\leq \frac{1}{1+y}$ for any $n>0$ and any $y\geq 0$.

\item c.
\begin{equation}
\label{first deritative}
\tilde{\varphi}_n'(y)\begin{cases} =1+\ln (1+y) \;\;\;\;\;\text{ if }0\leq y\leq n,
\\ =0\,\;\;\;\;\quad\quad\;\quad\quad\text{ if } y\geq C_n,
\\ \geq 0, \text{ and } \leq 1+\ln (1+y)\,\;\;\text{ if } n<y\leq C_n.
\end{cases}\end{equation}
In one word, $0\leq \tilde{\varphi}'_n\leq 1+\ln(1+y)$ for any $y\geq 0,$ and it is compactly supported.

\item  d. For any given $n>0,$ we have
\begin{equation}
\label{bound of second deritative of cut function}
|\varphi^{''}_n(\u)|\leq 6+2\ln(1+n)
\end{equation}
for any $\u\in \R^3.$

\item e. \begin{equation}
\label{definition of cutting big u}
\tilde{\varphi}_n(y)=\begin{cases} (1+y)\ln (1+y) \;\;\;\;\;\quad\quad\quad\quad\quad\text{ if }0\leq y< n,
\\2(1+\ln(1+n))y-(1+y)\ln(1+y)+2(\ln(1+n)-n), \text{ if } n\leq y\leq   C_n,
\\e(1+n)^2-2n-2\,\;\;\;\;\quad\quad\quad\quad\quad\;\text{ if } y \geq  C_n,
\end{cases}\end{equation}
$\tilde{\varphi}_n(y)$ is a nondecreasing function with respect to $y$ for any fixed $n$, and it is a nondecreasing function with respect to $n$ for any fixed $y$.
\begin{equation}
\label{almost everywhere convergence for cut function}
\tilde{\varphi}_n(y)\to (1+y)\ln(1+y)\,\,\text{a.e.}
\end{equation}
as $n\to\infty.$
\end{itemize}
\end{Lemma}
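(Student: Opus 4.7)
The plan is to verify each of the five items in sequence, since they build on each other in the order stated. Part (a) is the standard chain rule applied to $\u \mapsto \tilde\varphi_n(|\u|^2)$: the first differentiation gives $\nabla \varphi_n(\u) = 2\tilde\varphi_n'(|\u|^2)\u$, and a second differentiation produces the symmetric matrix claimed in \eqref{second deritative}. Part (b) is immediate from the three-case formula defining $\tilde\varphi_n''$: the absolute value is either $1/(1+y)$ or $0$ in each region.

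For parts (c) and (e) I would simply integrate the piecewise formula for $\tilde\varphi_n''$ from $y=0$, matching continuously at $y=n$ and $y = C_n$. On $[0,n]$ this yields $\tilde\varphi_n'(y) = 1 + \ln(1+y)$ and $\tilde\varphi_n(y) = (1+y)\ln(1+y)$; on $(n, C_n]$ one picks up the term $2\ln(1+n)$ from the jump in sign of $\tilde\varphi_n''$, producing $\tilde\varphi_n'(y) = 1 + 2\ln(1+n) - \ln(1+y)$, and one more integration yields the affine-plus-logarithmic expression in \eqref{definition of cutting big u}. The specific choice $C_n = e(1+n)^2 - 1$ is calibrated exactly so that $\tilde\varphi_n'(C_n) = 0$, which is what is needed for consistency with $\tilde\varphi_n'' \equiv 0$ beyond $C_n$; evaluating the middle formula at $y = C_n$ then gives the constant value $e(1+n)^2 - 2n - 2$. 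The inequality $\tilde\varphi_n'(y) \leq 1+\ln(1+y)$ on $(n, C_n]$ reduces to $\ln(1+n) \leq \ln(1+y)$, which holds since $y \geq n$, and nonnegativity follows because $\tilde\varphi_n'$ is decreasing there and vanishes at the endpoint $C_n$.

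For part (d) I would combine (a), (b) and (c). The operator norm of $2\tilde\varphi_n''(|\u|^2)\u\otimes\u$ is bounded by $2|\u|^2/(1+|\u|^2) \leq 2$, so its contribution to \eqref{second deritative} is at most $4$. The key observation for the other term is that $\tilde\varphi_n'$ is uniformly bounded in $y$ by $1+\ln(1+n)$: on $[0,n]$ this is obvious; on $(n, C_n]$ one uses $1 + 2\ln(1+n) - \ln(1+y) \leq 1 + \ln(1+n)$ because $y \geq n$; beyond $C_n$ the derivative vanishes. The identity matrix contribution is therefore at most $2(1+\ln(1+n))$, and summing gives the bound $6+2\ln(1+n)$.

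The slightly more delicate item is monotonicity of $\tilde\varphi_n$ in $n$ in part (e). I would argue at the level of the first derivative, showing $n \mapsto \tilde\varphi_n'(y)$ is nondecreasing for each fixed $y$ by a short case analysis according to where $y$ sits relative to $n$, $n'$, $C_n$, $C_{n'}$; each sub-case reduces to $\ln(1+n) \leq \ln(1+n')$ or to a triviality, and integrating from $0$ yields the monotonicity of $\tilde\varphi_n$ itself. Monotonicity in $y$ is immediate from $\tilde\varphi_n' \geq 0$. The a.e. (in fact pointwise) convergence \eqref{almost everywhere convergence for cut function} is then automatic, since for any fixed $y$, once $n > y$ the point $y$ lies in the first region and $\tilde\varphi_n(y) = (1+y)\ln(1+y)$ exactly. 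None of these steps presents a real obstacle; the main point of care is tracking signs and matching constants in the middle region, which is exactly what the choice of $C_n$ is designed to make painless.
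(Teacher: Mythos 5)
Your proposal is correct and follows essentially the same route as the paper: integrate the piecewise definition of $\tilde\varphi_n''$ twice with the stated initial conditions, use the chain rule for (a), and combine (a)--(c) with the pointwise bound $\tilde\varphi_n'\leq 1+\ln(1+n)$ for (d). You are somewhat more explicit than the paper in two spots that it glosses over — you state the uniform bound $\tilde\varphi_n'\leq 1+\ln(1+n)$ explicitly (the paper uses it silently in (d)) and you sketch the case analysis behind monotonicity in $n$, which the paper merely asserts — but the underlying argument is the same.
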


\begin{proof}
We prove each statement one by one as follows:

\begin{itemize}
\item a.  Thanks to \eqref{relation}, we have $\varphi'_n(\u)=2\tilde{\varphi}'_n(|\u|^2)\u$, and
\begin{equation*}
\label{second deritative}\varphi_n''(\u)=2\left(2\tilde{\varphi}_n''(|\u|^2)\u\otimes\u+\mathbf{I} \tilde{\varphi}_n'(|\u|^2)\right),
\end{equation*}
where $\mathbf{I}$ is $3\times 3$ identity matrix.
\item b. The statement of b. follows directly from  \eqref{second deritative-1}.

\item c.
Integrating \eqref{second deritative-1} with initial data  $\tilde{\varphi}'_n(0)=0,$ we have
 \begin{equation}
 \label{daoshu}
\tilde{\varphi}'_n(y)=\begin{cases} 1+\ln (1+y) \;\;\;\;\;\quad\quad\quad\quad\quad\text{ if }0\leq y< n,
\\1+2\ln(1+n)-\ln(1+y), \text{ if } n\leq y\leq  C_n
\\0\,\;\;\;\;\quad\quad\quad\quad\quad\;\text{ if } y \geq C_n,
\end{cases}\end{equation}

Since $$1+2\ln(1+n)-\ln(1+C_n)= 0,$$
thus, for any $y\geq 0$, we have $$\tilde{\varphi}'_n(y)\geq 0.$$
For any $n\leq y\leq C_n,$ we have
$$1+2\ln(1+n)-\ln(1+y)\leq 1+2\ln(1+y)-\ln(1+y)=1+\ln(1+y).$$
Thus, for any $y\geq 0$, we have
$$\tilde{\varphi}'_n(y)\leq 1+\ln(1+y).$$

\item d.
By a.-c., $$|\varphi^{''}_n(\u)|\leq 4|\tilde{\varphi}'_n||\u|^2+2|\tilde{\varphi}'_n|\leq 4\frac{|\u|^2}{1+|\u|^2}+2(1+\ln(1+n))\leq 6+2\ln(1+n).$$
\item f.
Integrating  \eqref{daoshu} with initial data $\tilde{\varphi}_n(0)=0$, it gives \eqref{definition of cutting big u}. Moreover,
thanks to c., $\tilde{\varphi}_n(y)$ is an increasing function with respect to $y$ for any fixed $n$.
We  have also that $\tilde{\varphi}_n(y)$ is a nondecreasing function with respect to $n$ for any fixed $y$.

\end{itemize}

\end{proof}

The first step of constructing the approximation of the Mellet-Vasseur type inequality is the following lemma:
\begin{Lemma}
For any weak solutions to \eqref{last level} constructed in Proposition \ref{pro weak solutions}, and any $\psi(t)\in \mathfrak{D}(-1,+\infty)$, we have
\begin{equation}
\label{the first level of MV inequality}
\begin{split}
-\int_0^T\int_{\O}\psi_t\rho\varphi_n(\v)\,dx\,dt&+\int_0^T\int_{\O}\psi(t)\varphi'_n(\v)F\,dx\,dt+\int_0^T\int_{\O}\psi(t)\mathbf{S}:\nabla(\varphi'_n(\v))\,dx\,dt
\\&=\int_{\O}\rho_0\varphi_n(\v_0)\psi(0)\,dx,
\end{split}
\end{equation}
where \begin{equation}
\begin{split}
\label{definition of SF}
&\mathbf{S}=\rho\phi(\rho)(\mathbb{D}\u+\kappa\frac{\Delta\sqrt{\rho}}{\sqrt{\rho}} \mathbb{I}),\;\;\text{ and }\\
&F=\rho^2\u\phi'(\rho)\Dv\u+2\rho^{\frac{\gamma}{2}}\nabla\rho^{\frac{\gamma}{2}}\phi(\rho)
+\rho\nabla\phi(\rho)\mathbb{D}\u+r_0\u\phi(\rho)
\\&+r_1\rho|\u|^2\u\phi(\rho)
+\kappa\sqrt{\rho}\nabla\phi(\rho)\Delta\sqrt{\rho}+2\kappa\phi(\rho)\nabla\sqrt{\rho}\Delta\sqrt{\rho},
\end{split}
\end{equation}
where $\mathbb{I}$ is an identical matrix.
\end{Lemma}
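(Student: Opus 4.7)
The plan is to derive the identity by testing the momentum equation in \eqref{last level} against $\psi(t)\phi(\rho)\varphi'_n(\v)$ and combining with the continuity equation; because weak solutions lack the regularity to accept this test function directly, the identity will need to be justified through a mollification argument.

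I would begin with the formal derivation. Using continuity to rewrite the momentum equation as $\rho D_t\u = -\nabla\rho^\gamma + \Dv(\rho\mathbb{D}\u) - r_0\u - r_1\rho|\u|^2\u + \kappa\rho\nabla(\Delta\sqrt{\rho}/\sqrt{\rho})$ with $D_t = \partial_t + \u\cdot\nabla$, and noting $D_t\rho = -\rho\,\Dv\u$, one computes $\rho D_t\v = -\rho^2\phi'(\rho)(\Dv\u)\u + \phi(\rho)\,\rho D_t\u$, so that the chain rule gives $\partial_t(\rho\varphi_n(\v)) + \Dv(\rho\u\,\varphi_n(\v)) = \varphi'_n(\v)\cdot\rho D_t\v$. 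Multiplying by $\psi(t)$, integrating over $(0,T)\times\O$, using periodicity to discard the divergence term, and integrating by parts in time yield $-\int\psi_t\rho\varphi_n(\v) - \psi(0)\int\rho_0\varphi_n(\v_0) = \int\psi\,\varphi'_n(\v)\cdot\rho D_t\v$. Integrating by parts in space on each of the three gradient terms $\phi(\rho)\varphi'_n(\v)\cdot\nabla\rho^\gamma$, $\phi(\rho)\varphi'_n(\v)\cdot\Dv(\rho\mathbb{D}\u)$, and $\phi(\rho)\varphi'_n(\v)\cdot\kappa\rho\nabla(\Delta\sqrt{\rho}/\sqrt{\rho})$ then separates the contributions: gradients falling on $\varphi'_n(\v)$ assemble into $\mathbf{S}:\nabla\varphi'_n(\v)$, while those falling on $\phi(\rho)$ combine with the drag and $\Dv\u$ terms to produce $\varphi'_n(\v)\cdot F$, with the two $\kappa$-terms in $F$ emerging from the algebraic identity $\nabla(\rho\phi(\rho))\cdot(\Delta\sqrt{\rho}/\sqrt{\rho}) = 2\phi(\rho)\nabla\sqrt{\rho}\,\Delta\sqrt{\rho} + \sqrt{\rho}\,\nabla\phi(\rho)\,\Delta\sqrt{\rho}$.

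To make this rigorous I would regularize in space via a standard mollifier $J_\varepsilon$ and work with the mollified continuity equation $\partial_t\rho_\varepsilon + \Dv(\rho\u)_\varepsilon = 0$, whose commutator $[J_\varepsilon,\u\cdot\nabla]\rho$ vanishes in $L^1_{t,x}$ by a DiPerna--Lions type argument. Lemma \ref{lemma of estimates for positive kappa} supplies $\nabla\v\in L^2(L^2)$ and $\rho_t\in L^4(L^{6/5})+L^2(L^{3/2})$, giving enough integrability to apply the chain rule on the smooth level and then pass to $\varepsilon\to 0$; the compact support and boundedness of $\varphi'_n,\varphi''_n$ from Lemma \ref{Lemma for the cut function}, combined with the a priori bounds \eqref{a priori estimate from energy}--\eqref{estimate on u}, deliver dominated convergence in every term.

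The main obstacle will be controlling the commutators produced by the nonlinear composition $\varphi_n(\phi(\rho)\u)$: the residuals from reconstructing $\rho D_t\v$ out of $(\rho\u)_\varepsilon$ and $\rho_\varepsilon$ must vanish in appropriate Lebesgue spaces, which forces pairing the DiPerna--Lions commutator bounds with the $H^1$ regularity of $\v$. The capillary sector is equally delicate, since $\Delta\sqrt{\rho}$ is only in $L^2(L^2)$ for $\kappa>0$: every $\kappa$-contribution must be handled through the precise structure of \eqref{estimate of second derivative density} and \eqref{J inequality for weak solutions}, and it is essential that the two $\kappa$-terms collected into $F$ match exactly the boundary contributions of the integration by parts against $\phi(\rho)$, otherwise no finite identity could survive at fixed $\kappa$.
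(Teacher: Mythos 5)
Your formal derivation and the choice to pass through a space--time mollification with Lions/DiPerna--Lions commutator estimates is the right idea for the interior part of the argument, and it tracks closely with what the paper actually does: the paper first restricts to $\psi\in\mathfrak{D}(0,\infty)$, tests the rewritten equation $(\rho\v)_t+\Dv(\rho\u\otimes\v)-\Dv\mathbf{S}+F=0$ against $\overline{\psi(t)\varphi'_n(\overline{\v})}$, and dispatches the two commutators $R_1$ (in time) and $R_2$ (in space) with Lemmas \ref{Lions's lemma} and \ref{similar to Lions's lemma}, then removes the mollifier using the $L^2(H^1)$ bound on $\v$ and the $L^\infty$ bounds on $\varphi_n',\varphi_n''$. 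So the regularization-plus-commutator skeleton you propose is sound.

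However, there is a genuine gap in your proposal at the initial time. You write that ``integrating by parts in time'' yields the boundary term $-\psi(0)\int_\O\rho_0\varphi_n(\v_0)\,dx$, but for test functions $\psi\in\mathfrak{D}(-1,+\infty)$ that do not vanish at $t=0$ this is precisely what must be \emph{proved}, not assumed: a weak solution a priori attains its initial data only weakly, and the commutator argument is only valid for $\psi$ compactly supported away from $t=0$. The paper addresses this with a separate argument of comparable length: it first establishes $\rho\in C([0,T];L^2(\O))$ and $\sqrt{\rho}\in C([0,T];L^p(\O))$ for $p\le 6$ from the bounds $(\sqrt\rho)_t\in L^2(L^2)$, $\sqrt\rho\in L^2(H^2)$, then proves $\sqrt{\rho}\u\in C([0,T];L^2(\O))$ via the $\text{ess}\limsup_{t\to0}\int_\O|\sqrt{\rho}\u-\sqrt{\rho_0}\u_0|^2\,dx=0$ estimate, splitting into $B_1$ (controlled by the weak continuity $\rho\u\in C([0,T];L^{3/2}_{\rm weak})$ together with $\rho\in C([0,T];L^2)$) and $B_2$ (which requires the extra initial-data restriction $\rho_0\geq 1/m_0$ and $\sqrt{\rho_0}\u_0\in L^\infty$ imposed at the start of Section 2); only then does it take $\psi_\tau(t)=\psi(\tau)\,t/\tau$ for $t\le\tau$ and pass $\tau\to 0$ to produce the $\psi(0)$ boundary term. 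Without this step the identity in the Lemma is not established for the stated class $\mathfrak{D}(-1,+\infty)$; your proposal does not mention the initial-trace continuity at all, nor the role played by the hypotheses $\rho_0\ge 1/m_0$, $\sqrt{\rho_0}\u_0\in L^\infty$ which exist precisely to make it work.

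A secondary, more minor remark: mollifying only in space, as you propose, does not by itself control the time commutator that appears when you commute the mollifier past $\partial_t(\rho\v)$; the paper mollifies in $(t,x)$ jointly and invokes a time-analogue of the Lions commutator lemma (Lemma \ref{similar to Lions's lemma}) for $R_1$. Your plan would need to either add a time mollification or argue directly that $\rho\v$ has enough time regularity, using $\rho_t\in L^4(L^{6/5})+L^2(L^{3/2})$ from Lemma \ref{lemma of estimates for positive kappa}.
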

In this proof, $\kappa$, $m$ and $K$ are fixed. So the dependence of the constants appearing in this proof will not be specified.

Multiplying $\phi(\rho)$ on both sides of the second equation of \eqref{last level}, we have
\begin{equation*}
\begin{split}
&(\rho\v)_t-\rho\u\phi'(\rho)\rho_t+\Dv(\rho\u\otimes\v)-\rho\u\otimes\u\nabla\phi(\rho)+2\rho^{\frac{\gamma}{2}}\nabla\rho^{\frac{\gamma}{2}}\phi(\rho)
\\&-\Dv(\phi(\rho)\rho\mathbb{D}\u)
+\rho\nabla\phi(\rho)\mathbb{D}\u+r_0\u\phi(\rho)+r_1\rho|\u|^2\u\phi(\rho)-\kappa\nabla(\sqrt{\rho}\phi(\rho)\Delta\sqrt{\rho})
\\&+\kappa\sqrt{\rho}\nabla\phi(\rho)\Delta\sqrt{\rho}+2\kappa\phi(\rho)\nabla\sqrt{\rho}\Delta\sqrt{\rho}=0.
\end{split}
\end{equation*}
\begin{Remark}
Both $\nabla\sqrt{\rho}$ and $\rho_t$ are functions, so the above equality are justified by regularizing $\rho$ and passing into the limit.
\end{Remark}
 We can rewrite the above equation as follows
 \begin{equation}
 \label{new form}
 (\rho\v)_t+\Dv(\rho\u\otimes\v)-\Dv\mathbf{S}+F=0,
 \end{equation}
where $\mathbf{S}$ and $F$ are as in \eqref{definition of SF}, and here we used
\begin{equation*}
\begin{split}
\rho\u\phi'(\rho)\rho_t&+\rho\u\otimes\u\phi'(\rho)\nabla\rho=\rho\u\phi'(\rho)(\rho_t+\nabla\rho\cdot\u)
\\&=-\rho^2\u\phi'(\rho)\Dv\u.
\end{split}
\end{equation*}
We should remark that, thanks to \eqref{J inequality for weak solutions}, \eqref{a priori estimate from energy}-\eqref{estimate on u},
\begin{equation*}
\|F\|_{L^{\frac{4}{3}}(0,T;L^1(\O))}\leq C,\;\;\;\|\mathbf{S}\|_{L^2(0,T;L^2(\O))}\leq C,
\end{equation*}
since $\sqrt{\rho}\phi(\rho)$ and $\rho\phi(\rho)$ bounded. Those bounds depend on $K$ and $\kappa.$\\

We first introducing a test function $\psi(t)\in  \mathfrak{D}(0,+\infty)$. Essentially this function vanishes for $t$ close $t=0$. We will later extend the result for $\psi(t)\in  \mathfrak{D}(-1,+\infty)$.
We define a new function $\Phi=\overline{\psi(t)\varphi'_n(\overline{\v})}$, where $\overline{f(t,x)}=f*\eta_{k}(t,x)$, $k$ is a small enough number. Note that, since $\psi(t)$ is compactly supported in $(0,\infty).$ $\Phi$ is well defined on $(0,\infty)$ for $k$ small enough. We use it to test \eqref{new form} to have
\begin{equation*}
\int_0^T\int_{\O}\overline{\psi(t)\varphi'_n(\overline{\v})}[(\rho\v)_t+\Dv(\rho\u\otimes\v)-\Dv\mathbf{S}+F]\,dx\,dt=0,
\end{equation*}
which in turn gives us
\begin{equation}
\label{weak formulation of new form}
\int_0^T\int_{\O}\psi(t)\varphi'_n(\overline{\v})\overline{[(\rho\v)_t+\Dv(\rho\u\otimes\v)-\Dv\mathbf{S}+F]}\,dx\,dt=0.
\end{equation}

 The first term in \eqref{weak formulation of new form} can be calculated as follows
 \begin{equation}
 \begin{split}
 \label{the first term of new form}
&\int_0^T\int_{\O}\psi(t)\varphi'_n(\overline{\v})\overline{(\rho\v)_t}\,dx\,dt
\\&=\int_0^T\int_{\O}\psi(t)\varphi'_n(\overline{\v})(\rho\overline{\v})_t\,dx\,dt+\int_0^T\int_{\O}\psi(t)\varphi'_n(\overline{\v})[\overline{(\rho\v)_t}-(\rho\overline{\v})_t]\,dx\,dt
\\&=\int_0^T\int_{\O}\psi(t)\varphi'_n(\overline{\v})(\rho_t\overline{\v}+\rho\overline{\v}_t)\,dx\,dt+R_1
\\&=\int_0^T\int_{\O}\psi(t)\rho_t\varphi'_n(\overline{\v})\overline{\v}\,dx\,dt+\int_0^T\int_{\O}\psi(t)\rho\varphi_n(\overline{\v})_t\,dx\,dt+R_1,
\end{split}
\end{equation}
where $$R_1=\int_0^T\int_{\O}\psi(t)\varphi'_n(\overline{\v})[\overline{(\rho\v)_t}-(\rho\overline{\v})_t]\,dx\,dt.$$

Thanks to the first equation in \eqref{last level}, we can rewrite the second term in \eqref{weak formulation of new form} as follows
\begin{equation}
\begin{split}
\label{the second term of new form}
&\int_0^T\int_{\O}\psi(t)\varphi'_n(\overline{\v})\overline{\Dv(\rho\u\otimes\v)}\,dx\,dt\\
&=\int_0^T\int_{\O}\psi(t)\rho_t\varphi_n(\overline{\v})\,dx\,dt-\int_0^T\int_{\O}\psi(t)\rho_t\varphi'_n(\overline{\v})\overline{\v}+R_2,
\end{split}
\end{equation}
 and $$R_2=\int_0^T\int_{\O}\psi(t)\varphi'_n(\overline{\v})[\Dv(\rho\u\otimes\overline{\v})-\overline{\Dv(\rho\u\otimes\v)}].$$
By \eqref{weak formulation of new form}-\eqref{the second term of new form}, we have
\begin{equation}
\label{weak formulation with R1 R2 A}
\begin{split}
\int_0^T\int_{\O}\psi(t)(\rho\varphi_n(\overline{\v}))_t\,dx\,dt+R_1&+R_2-\int_0^T\int_{\O}\psi(t)\varphi'_n(\overline{\v})\overline{\Dv{\mathbf{S}}}\;dx\,dt
\\&+\int_0^T\int_{\O}\psi(t)\varphi'_n(\overline{\v})\overline{F}=0.
\end{split}
\end{equation}

Notice that $\overline{\v}$ converges to $\v$ almost everywhere and $$\rho\varphi_n(\overline{\v})\psi_{t}\to\rho\varphi_n(\v)\psi_{t}\;\;\text{in } L^1((0,T)\times\O).$$ So, up to a subsequence,
we have
\begin{equation}
\label{convegence first term}
\int_0^T\int_{\O}(\rho\varphi_n(\overline{\v}))\psi_t\,dx\,dt\to \int_0^T\int_{\O}(\rho\varphi_n(\v))\psi_t\,dx\,dt\;\;\text{as }\;k\to0.
\end{equation}
Since $\varphi'_n(\overline{\v})$ converges to $\varphi'_n(\v)$ almost everywhere,  and is uniformly bounded in $ L^{\infty}(0,T;\O),$
we have
\begin{equation}
\label{convergence F term}
\int_0^T\int_{\O}\psi(t)\varphi'_n(\overline{\v})\overline{F}\to \int_0^T\int_{\O}\psi(t)\varphi'_n(\v)F \;\;\text{as }\;k\to0.
\end{equation}
Noticing that $$\nabla\v\in L^2(0,T;L^2(\O)),$$
we have $$\overline{\nabla\v}\to \nabla\v\;\;\text{ strongly in } L^2(0,T;L^2(\O)).$$
Since $\overline{ \mathbf{S}}$ converges to $ \mathbf{S}$ strongly in $ L^2(0,T;L^2(\O)),$ and
$ \varphi''_n(\overline{\v})$ converges to $\varphi''_n(\v)$ almost everywhere  and uniformly bounded in $ L^{\infty}((0,T)\times\O),$
 we get
\begin{equation}
\label{S term}
\int_0^T\int_{\O}\psi(t)\varphi'_n(\overline{\v})\overline{\Dv{\mathbf{S}}}\;dx\,dt=-
\int_0^T\int_{\O}\psi(t)\overline{\mathbf{S}}:\nabla(\varphi'_n(\overline{\v}))\;dx\,dt,
\end{equation}
which converges to
\begin{equation}
\label{S term2}
-\int_0^T\int_{\O}\psi(t)\mathbf{S}:\nabla(\varphi'_n(\v))\;dx\,dt.
\end{equation}

To handle $R_1$ and $R_2$, we use the following lemma due to Lions, see \cite{Lions1}.
 \begin{Lemma}
 \label{Lions's lemma}
 Let $f\in W^{1,p}(\R^N),\,g\in L^{q}(\R^N)$ with $1\leq p,q\leq \infty$, and $\frac{1}{p}+\frac{1}{q}\leq 1$. Then, we have
 $$\|\Dv(fg)*w_{\varepsilon}-\Dv(f(g*w_{\varepsilon}))\|_{L^{r}(\R^N)}\leq C\|f\|_{W^{1,p}(\R^N)}\|g\|_{L^{q}(\R^N)}$$
 for some $C\geq 0$ independent of $\varepsilon$, $f$ and $g$, $r$ is determined by $\frac{1}{r}=\frac{1}{p}+\frac{1}{q}.$ In addition,
  $$\Dv(fg)*w_{\varepsilon}-\Dv(f(g*w_{\varepsilon}))\to0\;\;\text{ in }\,L^{r}(\R^N)$$
 as $\varepsilon \to 0$ if $r<\infty.$
 \end{Lemma}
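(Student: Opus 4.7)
The plan is to establish this Friedrichs-type commutator estimate by deriving a closed-form expression for the commutator, bounding each piece by Young/Minkowski, and then upgrading to strong convergence by density. Let $w_\varepsilon(x)=\varepsilon^{-N}w(x/\varepsilon)$ be a standard mollifier and set $R_\varepsilon:=\Dv(fg)*w_\varepsilon - \Dv(f(g*w_\varepsilon))$ (here $f$ is the vector-valued factor, $g$ scalar; the other case is symmetric). First I would derive the pointwise identity
$$R_\varepsilon(x) = \int_{\R^N} g(y)\bigl(f(y)-f(x)\bigr)\cdot\nabla w_\varepsilon(x-y)\,dy - (\Dv f)(x)\,(g*w_\varepsilon)(x),$$
obtained by commuting $\Dv$ with convolution, integrating by parts in $y$ in the first term (using $\nabla_y w_\varepsilon(x-y) = -\nabla_x w_\varepsilon(x-y)$), and expanding $\Dv(f(g*w_\varepsilon))$ by the product rule.

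Next I would establish the uniform $L^r$ bound termwise. The second piece is immediate: Young's inequality gives $\|g*w_\varepsilon\|_{L^q}\le\|g\|_{L^q}$ and H\"older then yields $\|(\Dv f)(g*w_\varepsilon)\|_{L^r}\le\|\Dv f\|_{L^p}\|g\|_{L^q}$. For the first piece, I would invoke the absolutely-continuous-on-lines representation of Sobolev functions to write $f(y)-f(x)=\int_0^1 \nabla f(x+s(y-x))\cdot(y-x)\,ds$ almost everywhere, substitute $z=(x-y)/\varepsilon$ (so that $|y-x|=\varepsilon|z|$ and $\nabla w_\varepsilon(x-y)=\varepsilon^{-N-1}\nabla w(z)$), and apply Minkowski in $z$ together with translation-invariance of $L^p$-norms, yielding
$$\Bigl\|\int g(y)\bigl(f(y)-f(x)\bigr)\cdot\nabla w_\varepsilon(x-y)\,dy\Bigr\|_{L^r_x}\le C\|\nabla f\|_{L^p}\|g\|_{L^q},\qquad C=\int_{\R^N}|z|\,|\nabla w(z)|\,dz<\infty.$$
Summing the two estimates gives the uniform bound claimed in the lemma.

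For strong convergence when $r<\infty$, I would proceed by density. Choose $f^n\in C_c^\infty(\R^N)$ with $f^n\to f$ in $W^{1,p}$ and split $R_\varepsilon[f,g]=R_\varepsilon[f-f^n,g]+R_\varepsilon[f^n,g]$. The uniform bound from the previous step controls the first summand by $C\|f-f^n\|_{W^{1,p}}\|g\|_{L^q}$, which can be made smaller than any prescribed $\eta>0$ by choosing $n$ large, independently of $\varepsilon$. For fixed smooth $f^n$, substituting $z=(x-y)/\varepsilon$ and Taylor-expanding $f^n(x-\varepsilon z)-f^n(x) = -\varepsilon\nabla f^n(x)\cdot z + O(\varepsilon^2)$ gives pointwise convergence of the first piece of $R_\varepsilon[f^n,g]$ to $g(x)(\Dv f^n)(x)$, via the identity $\int z_j\partial_i w(z)\,dz=-\delta_{ij}$ obtained by integration by parts. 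This exactly cancels the limit $(\Dv f^n)g$ of the second piece, so $R_\varepsilon[f^n,g]\to 0$ pointwise a.e., and the dominating bound from Step~2 upgrades this to $L^r$ convergence by dominated convergence. A diagonal argument combining the two pieces closes the proof.

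The main obstacle I anticipate is the careful justification of the line-integral representation of $f(y)-f(x)$ when $p=1$, which requires a Fubini/ACL argument rather than a direct smooth approximation; this is where the Sobolev hypothesis $f\in W^{1,p}$ is used in an essential nontrivial way. Once that representation is available, the remaining bounds are routine applications of Young's and Minkowski's inequalities, and the strong convergence reduces to the standard density of $C_c^\infty$ in $W^{1,p}$ combined with the uniform bound.
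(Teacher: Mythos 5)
The paper does not prove this statement: it cites it as a result of Lions (see \cite{Lions1}), so there is no in-paper argument to compare against. What you have written is the classical DiPerna--Lions / Friedrichs commutator argument, and its architecture --- the exact commutator identity, the line-integral (ACL) representation plus a change of variables and Minkowski's integral inequality for the uniform $L^r$ bound, then density of $C_c^\infty$ in $W^{1,p}$ for the strong convergence --- is the standard and correct route to this lemma.

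Two details need repair. First, your final step passes from a.e.\ convergence of $R_\varepsilon[f^n,g]$ to $L^r$-convergence ``by dominated convergence'' invoking ``the dominating bound from Step 2.'' That bound is an $L^r$-\emph{norm} estimate, not a pointwise $L^r$-envelope; and the natural pointwise majorant $\|\nabla f^n\|_\infty\int |g(x-\varepsilon z)|\,|z|\,|\nabla w(z)|\,dz$ depends on $\varepsilon$, so DCT in $x$ does not apply directly. The clean fix is to estimate $\|R_\varepsilon[f^n,g]\|_{L^r}$ by Minkowski in the $(s,z)$ variables and use continuity of translation in $L^q$ (and in $L^p$, or the uniform continuity of $\nabla f^n$), running dominated convergence in $(s,z)$ rather than in $x$; alternatively, note that for compactly supported $f^n$ the commutator is supported in a fixed bounded set, where $L^q$-convergence of the convolution pieces upgrades to $L^r$ by H\"older. (When $q=\infty$, $p<\infty$, one instead dominates by $C\|g\|_{L^\infty}|\nabla f^n(x)|\in L^p$ and runs DCT in $x$ at Lebesgue points of $g$.) Second, your stated obstacle at $p=1$ is a red herring: the ACL representation holds for every $p\ge1$, and $C_c^\infty$ is dense in $W^{1,1}$, so the density step is unproblematic there. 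The genuine edge case for your scheme is $p=\infty$, where $C_c^\infty$ is \emph{not} dense in $W^{1,\infty}$; since $r<\infty$ then forces $q<\infty$, one should approximate $g$ in $L^q$ instead and exploit the ACL/Lipschitz structure of $f$ directly. None of these edge cases arise in the paper's applications (all exponents there are finite), so these are refinements rather than structural defects; with the DCT step repaired, your proof is complete.
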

This lemma includes the following statement.
 \begin{Lemma}
 \label{similar to Lions's lemma}
 Let $f_t\in L^p(0,T),\,g\in L^{q}(0,T)$ with $1\leq p,q\leq \infty$, and $\frac{1}{p}+\frac{1}{q}\leq 1$. Then, we have
 $$\|(fg)_t*w_{\varepsilon}-(f(g*w_{\varepsilon}))_t\|_{L^{r}(0,T)}\leq C\|f_t\|_{L^{p}(0,T)}\|g\|_{L^{q}(0,T)}$$
 for some $C\geq 0$ independent of $\varepsilon$, $f$ and $g$, $r$ is determined by $\frac{1}{r}=\frac{1}{p}+\frac{1}{q}.$ In addition,
  $$(fg)_t*w_{\varepsilon}-(f(g*w_{\varepsilon}))_t\to0\;\;\text{ in }\,L^{r}(0,T)$$
 as $\varepsilon \to 0$ if $r<\infty.$
 \end{Lemma}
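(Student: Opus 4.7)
The plan is to recognize this statement as the one-dimensional, time-variable analogue of Lions's commutator lemma recalled just above, with $\Dv$ replaced by $\partial_t$ and $N=1$, together with a refinement of the constant from $\|f\|_{W^{1,p}}$ down to $\|f_t\|_{L^p}$. Accordingly, I would reduce to the preceding lemma and then upgrade the right-hand side.

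First I would set the problem up on $\R$. Fix $\delta>0$ and let $\chi\in C_c^\infty(\R)$ be a cutoff equal to $1$ on $(0,T)$ and supported in $(-\delta,T+\delta)$. Let $\tilde g=\chi g$ (extended by zero), and let $\tilde f$ be the primitive on $\R$ of the zero-extension of $f_t$, multiplied by $\chi$ (after subtracting a suitable constant from the primitive). For $\varepsilon<\delta/2$ the commutator built from $\tilde f$ and $\tilde g$ coincides, on $(0,T)$, with the original one. Lions's lemma applied in dimension $1$ (with $\partial_x$ playing the role of $\partial_t$) then yields
\[
\bigl\|(\tilde f\tilde g)_t*w_\varepsilon-(\tilde f(\tilde g*w_\varepsilon))_t\bigr\|_{L^r(\R)}\le C\|\tilde f\|_{W^{1,p}(\R)}\|\tilde g\|_{L^q(\R)},
\]
and convergence to $0$ in $L^r(\R)$ when $r<\infty$.

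To upgrade the bound to $\|f_t\|_{L^p(0,T)}\|g\|_{L^q(0,T)}$ I would exploit the key invariance of the commutator under the substitution $f\mapsto f+c$ for any constant $c\in\R$. Indeed, the added piece is
\[
c\bigl[g_t*w_\varepsilon-(g*w_\varepsilon)_t\bigr]=0,
\]
because, distributionally, the time derivative commutes with convolution ($g_t*w_\varepsilon=g*w_\varepsilon'=(g*w_\varepsilon)_t$). Using this invariance, I replace $\tilde f$ by $\tilde f-c$ with $c$ the mean of $\tilde f$ over the bounded support of $\chi$; Poincar\'e's inequality on $(-\delta,T+\delta)$ then gives $\|\tilde f-c\|_{L^p}\le C\|\tilde f_t\|_{L^p}\le C\|f_t\|_{L^p(0,T)}$, so the $W^{1,p}$ factor in Lions's estimate collapses to $\|f_t\|_{L^p(0,T)}$, as claimed. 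The convergence statement for $r<\infty$ is then inherited directly from the corresponding part of the preceding lemma applied to the extensions.

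The main obstacle I expect is precisely this refinement of the constant from $\|f\|_{W^{1,p}}$ to $\|f_t\|_{L^p}$: without the translation-invariance $f\mapsto f+c$ of the commutator (which itself rests on the identity $g_t*w_\varepsilon=(g*w_\varepsilon)_t$), there is no way to absorb $\|f\|_{L^p}$ into $\|f_t\|_{L^p}$. Everything else is a direct transcription of Lions's 1D argument together with a standard cutoff-and-extend procedure to pass between $(0,T)$ and $\R$.
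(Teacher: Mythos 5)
Your proposal is correct and goes further than the paper, which states only ``This lemma includes the following statement'' and offers no argument. You rightly pinpoint the one real gap: the preceding Lions lemma is stated with $\|f\|_{W^{1,p}}$ on the right, while the claim here requires the homogeneous bound $\|f_t\|_{L^p}$. Your route to bridge this---observing that the commutator is invariant under $f\mapsto f+c$ because $g_t*w_\varepsilon=(g*w_\varepsilon)_t$, so that one may normalize $f$ to have zero mean on a bounded neighborhood of $(0,T)$ and then invoke Poincar\'e---is legitimate and is, in effect, the standard reason the commutator estimate depends only on the derivative of $f$. The cutoff-and-extension step is a routine technicality, and since in the actual application (the terms $R_1$, $R_2$) the test function $\psi$ has compact support in $(0,\infty)$, the choice of extension near $t=0$ and $t=T$ is immaterial. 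One remark: the proof of the DiPerna--Lions commutator estimate directly yields the bound $C\|\nabla f\|_{L^p}\|g\|_{L^q}$, since the integral representation of the commutator naturally involves the difference $f(y)-f(x)$; so the $W^{1,p}$ norm appearing in the preceding lemma is really a weaker phrasing of a stronger fact, and your Poincar\'e-plus-invariance detour is a perfectly sound way to recover the stronger version if one takes the preceding lemma's statement at face value.
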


 With Lemma \ref{Lions's lemma} and Lemma \ref{similar to Lions's lemma} in hand, we are ready to handle the terms $R_1$ and $R_2$.
For $\kappa>0,$ by Lemma \ref{lemma of estimates for positive kappa} and Poincar\'{e} inequality, we have $\v\in L^2(0,T;L^6(\O))$.
We also have, by Lemma \ref{lemma of estimates for positive kappa},
\begin{equation*}
\rho_t\in L^4(0,T;L^{6/5}(\O))+L^2(0,T;L^{3/2}(\O)).
\end{equation*}
Thus, applying Lemma \ref{similar to Lions's lemma},
\begin{equation}
\label{R1 goes to 0}
\begin{split}
|R_1|&\leq\int_0^T\int_{\O}\left|\psi(t)\varphi'_n(\overline{\v})[\overline{(\rho\v)_t}-(\rho\overline{\v})_t]\right|\,dx\,dt
\\&\leq C(\psi)\int_0^T\int_{\O}\left|\varphi'_n(\overline{\v})[\overline{(\rho\v)_t}-(\rho\overline{\v})_t]\right|\,dx\,dt\to0\;\;\;\;\text{as }\;k\to0.
\end{split}
\end{equation}

Similarly, applying Lemma \ref{Lions's lemma}, we conclude
\begin{equation}
\label{R2 goes to 0}
R_2\to0\;\;\;\text{ as }\;\;k\to0.
\end{equation}

By \eqref{convegence first term}-\eqref{R2 goes to 0}, we have
\begin{equation}
\label{the second level of MV inequality without initial term}
\begin{split}
-\int_0^T\int_{\O}\psi_t\rho\varphi_n(\v)\,dx\,dt&+\int_0^T\int_{\O}\psi(t)\varphi'_n(\v)F\,dx\,dt
\\&\quad\quad+\int_0^T\int_{\O}\psi(t)\mathbf{S}:\nabla(\varphi'_n(\v))\,dx\,dt
=0,
\end{split}
\end{equation}
for any test function $\psi\in \mathfrak{D}(0,\infty).$\\
Now, we need to consider the test function $\psi(t)\in \mathfrak{D}(-1,\infty).$ For this, we need the continuity of $\rho(t)$ and $(\sqrt{\rho}\u)(t)$ in the strong topology at $t=0$.

In fact, thanks to Proposition \ref{pro weak solutions}, we have $$(\sqrt{\rho})_t\in L^2(0,T;L^2(\O)),\quad\quad\sqrt{\rho}\in L^2(0,T;H^2(\O)).$$ This gives us
$$\sqrt{\rho}\in C([0,T];L^2(\O))\quad\quad\text{ and }\;\; \nabla\sqrt{\rho} \in C(0,T; L^2(\O)),$$
thanks to Theorem 3 on page 287, see \cite{Evans}.
Similarly, we have
\begin{equation}
\label{continuity for density}\rho\in C([0,T];L^2(\O))
\end{equation}
due to $$\|\nabla\rho\|_{L^2(0,T;L^2(\O))}\leq C\|\nabla\rho^{\frac{1}{4}}\|_{L^4(0,T;L^4(\O))}\|\rho^{\frac{3}{4}}\|_{L^4(0,T;L^4(\O))}.$$
Meanwhile, we have $$\sqrt{\rho}\in L^{\infty}(0,T;L^p(\O))\;\;\text{ for any } 1\leq p\leq 6,$$
and hence \begin{equation}
\label{continuity in strong topology 1}\sqrt{\rho}\in C([0,T];L^p(\O))\;\;\text{ for any } 1\leq p\leq 6.
\end{equation}
On the other hand, we see
\begin{equation}
\begin{split}
\label{control of sqrt density and u}
&\text{ess}\limsup_{t\to0}\int_{\O}|\sqrt{\rho}\u-\sqrt{\rho_0}\u_0|^2\,dx
\\&\leq\text{ess}\limsup_{t\to0}\left(\int_{\O}(\frac{1}{2}\rho|\u|^2+\frac{\rho^{\gamma}}{\gamma-1}+\kappa|\nabla\sqrt{\rho}|^2)\,dx-\int_{\O}(\frac{1}{2}\rho_0|\u_0|^2+\frac{\rho_0^{\gamma}}{\gamma-1})+\kappa|\nabla\sqrt{\rho}_0|^2\,dx\right)
\\&+\text{ess}\limsup_{t\to0}\left(2\int_{\O}\sqrt{\rho_0}\u_0(\sqrt{\rho_0}\u_0-\sqrt{\rho}\u)\,dx+\int_{\O}(\frac{\rho_0^{\gamma}}{\gamma-1}-\frac{\rho^{\gamma}}{\gamma-1})\right)
\\&-\kappa\,\text{ess}\limsup_{t\to0}\left|\nabla\sqrt{\rho}_0-\nabla\sqrt{\rho}\right|^2\,dx+2\kappa \,\text{ess} \limsup_{t\to 0}\int_{\O}\nabla\sqrt{\rho}_0\cdot(\nabla\sqrt{\rho}_0-\nabla\sqrt{\rho})\,dx.
\end{split}
\end{equation}
We have \begin{equation}
\label{grade density}
\text{ess} \limsup_{t\to 0}\int_{\O}\nabla\sqrt{\rho}_0\cdot(\nabla\sqrt{\rho}_0-\nabla\sqrt{\rho})\,dx=0.
\end{equation}

So, using \eqref{energy inequality for approximation}, \eqref{continuity in strong topology 1} and the convexity of $\rho\mapsto \rho^{\gamma}$, we have
\begin{equation*}
\begin{split}
&\text{ess}\limsup_{t\to0}\int_{\O}|\sqrt{\rho}\u-\sqrt{\rho_0}\u_0|^2\,dx
\\&\leq 2\text{ess}\limsup_{t\to0}\int_{\O}\sqrt{\rho_0}\u_0(\sqrt{\rho_0}\u_0-\sqrt{\rho}\u)\,dx\\&
=  2\text{ess}\limsup_{t\to0}\left(\int_{\O}\sqrt{\rho_0}\u_0(\sqrt{\rho_0}\u_0-\sqrt{\rho}\u\phi_m(\rho))\,dx+\int_{\O}\sqrt{\rho_0}\u_0(1-\phi_m(\rho))\sqrt{\rho}\u\,dx\right)
\\&=B_1+B_2.
\end{split}
\end{equation*}

By Proposition \ref{pro weak solutions}, we have
\begin{equation}
\label{weak continuous}
\rho\u\in C([0,T]; L^{\frac{3}{2}}_{\text{weak}}(\O)).
\end{equation}
We consider $B_1$ as follows
\begin{equation*}\begin{split}
&\quad\quad\quad B_1=\\&2\text{ess}\limsup_{t\to0}\left(\int_{\O}\sqrt{\rho_0}\u_0(\frac{\phi_m(\rho)}{\sqrt{\rho}}(\rho_0\u_0-\rho\u))\,dx-\int_{\O}\sqrt{\rho_0}\rho_0|\u_0|^2(\frac{\phi_m(\rho)}{\sqrt{\rho}}-\frac{\phi_m(\rho_0)}{\sqrt{\rho_0}})\,dx\right)
,\end{split}
\end{equation*}
then we have $B_1=0,$ where we used \eqref{continuity in strong topology 1}
and \eqref{weak continuous}.

Since $m\geq m_0$, and $\rho_0\geq \frac{1}{m_0}$,  we have
 \begin{equation*}
 |B_2|\leq \|\sqrt{\rho_0}\u_0\|_{L^{\infty}(0,T;\O)}\|\sqrt{\rho}\u\|_{L^{\infty}(0,T;L^2(\O))}\text{ess}\limsup_{t\to0}\|1-\phi_m(\rho)\|_{L^2(0,T;\O)}=0.
 \end{equation*}
 Thus, we have
\begin{equation*}
\text{ess}\limsup_{t\to0}\int_{\O}|\sqrt{\rho}\u-\sqrt{\rho_0}\u_0|^2\,dx=0,
\end{equation*}
which gives us
\begin{equation}
\label{continuity in strong topology 2}\sqrt{\rho}\u\in C([0,T];L^2(\O)).
\end{equation}
 By \eqref{continuity for density} and \eqref{continuity in strong topology 2}, we get
 $$\lim_{\tau\to0}\frac{1}{\tau}\int_0^{\tau}\int_{\O}\rho\varphi_n(\v)\,dx\,dt=\int_{\O}\rho_0\varphi_n(\v_0)\,dx.$$

 Considering \eqref{the second level of MV inequality without initial term} for the test function,
 \begin{equation*}
 \begin{split}
 &\psi_{\tau}(t)=\psi(t)\;\;\text{ for } \,t\geq \tau,\;\;\psi_{\tau}(t)=\psi(\tau)\frac{t}{\tau}\;\;\text{ for } t\leq \tau,
 \end{split}
 \end{equation*}
 we get
 \begin{equation*}
\begin{split}
-\int_{\tau}^T\int_{\O}\psi_t\rho\varphi_n(\v)\,dx\,dt&+\int_0^T\int_{\O}\psi_{\tau}(t)\varphi'_n(\v)F\,dx\,dt
\\&\quad+\int_0^T\int_{\O}\psi_{\tau}(t)\mathbf{S}:\nabla(\varphi'_n(\v))\,dx\,dt
=\frac{\psi(\tau)}{\tau}\int_0^{\tau}\int_{\O}\rho\varphi_n(\v)\,dx\,dt.
\end{split}
\end{equation*}
 Passing into the limit as $\tau\to 0$, this gives us
 \begin{equation}
\label{the second level of MV inequality before limit}
\begin{split}
-\int_0^T\int_{\O}\psi_t\rho\varphi_n(\v)\,dx\,dt&+\int_0^T\int_{\O}\psi(t)\varphi'_n(\v)F\,dx\,dt\\&+\int_0^T\int_{\O}\psi(t)\mathbf{S}:\nabla(\varphi'_n(\v))\,dx\,dt
=\int_{\O}\rho_0\psi(0)\varphi_n(\v_0)\,dx.
\end{split}
\end{equation}

\section{Recover the limits as $m\to\infty$}

In this section, we want to recover the limits in \eqref{the first level of MV inequality} as $m\to\infty.$ Here, we should remark that $(\rho,\u)$ is any fixed
 weak solution to \eqref{last level} verifying Proposition \ref{pro weak solutions} with $\kappa>0$. For any fixed weak solution $(\rho,\u)$, we have
$$\phi_m(\rho)\to1\;\;\;\text{ almost everywhere  for } (t,x),$$
and it is uniform bounded in $L^{\infty}(0,T;\O),$
we also have
$$r_0\phi_K(\rho)\u\in L^2(0,T;L^2(\O)),$$ and thus
$$\v_m=\phi_m\phi_K\u\to\phi_K\u\;\;\;\text{ almost everywhere for } (t,x)$$
as $m\to\infty.$ By the Dominated Convergence Theorem, we have $$\v_m\to\phi_K\u\;\;\;\text{ in } L^2(0,T;L^2(\O))$$
as $m\to\infty,$
 and hence, we have $$\varphi_n(\v_m)\to\varphi_n(\phi_K\u)\;\;\;\text{ in } L^p((0,T)\times\O)$$
 for any $1\leq p<\infty$.

 Meanwhile, for any fixed $\rho$, we have $$\phi'_m(\rho)\to0\;\;\;\text{ almost everywhere for } (t,x)$$
 as $m\to\infty.$
 We calculate $|\phi'_m(\rho)|\leq 2m$ as $\frac{1}{2m}\leq\rho\leq \frac{1}{m},$ and otherwise, $\phi'_m(\rho)=0,$ thus, we have
 $$|\rho\phi'_m(\rho)|\leq 1\;\;\;\text{ for all }\,\rho.$$

We can find that
$$\int_0^T\int_{\O}\psi'(t)(\rho\varphi_n(\v_m))\,dx\,dt\to \int_0^T\int_{\O}\psi'(t)(\rho\varphi_n(\phi_K(\rho)\u))\,dx\,dt$$
and $$\int_{\O}\rho_0\varphi_n(\v_{m0})\to \int_{\O}\rho_0\varphi_n(\phi_K(\rho_0)\u_0)$$
as $m\to\infty.$

To pass into the limits in \eqref{the second level of MV inequality before limit} as $m\to\infty,$  we rely on the following Lemma:
 \begin{Lemma}
 \label{lemma for the limit as m goes to infinity}
 If $$\|a_m\|_{L^{\infty}(0,T;\O)}\leq C,\;\; a_m\to a\;\;\text{ a.e. for } (t,x) \text{ and in } L^p((0,T)\times\O)\;\;\text{ for any } 1\leq p<\infty,$$
 $f\in L^1((0,T)\times\O)$, then we have
 $$\int_0^T\int_{\O}\phi_m(\rho)a_mf\,dx\,dt\to\int_0^T\int_{\O}af\,dx\,dt\;\;\;\text{ as } m\to\infty,$$
 and $$\int_0^T\int_{\O}\left|\rho\phi'_m(\rho)a_mf\right|\,dx\,dt\to 0\;\;\text{ as } m\to\infty.$$
 \end{Lemma}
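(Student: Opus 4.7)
The plan is to obtain both limits by direct applications of the dominated convergence theorem, exploiting the precise supports of $\phi_m$ and $\phi'_m$ together with the uniform $L^\infty$ bound on $a_m$.

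For the first convergence, the key pointwise fact is that $\phi_m(\rho)\to 1$ almost everywhere on $\{\rho>0\}$: indeed, for every $\rho>0$ one has $\phi_m(\rho)=1$ as soon as $m>1/\rho$. On the vacuum set $\{\rho=0\}$ the cut-off gives $\phi_m(\rho)=0$ for every $m$, which is consistent with the implicit convention (dictated by the intended applications of the lemma) that the product $af$ vanishes a.e.\ on $\{\rho=0\}$, since in each use $f$ will carry a factor degenerating with $\rho$. Hence $\phi_m(\rho)a_m\to a$ a.e.\ on $(0,T)\times\O$. Combined with the uniform bound $|\phi_m(\rho)a_m|\leq\|a_m\|_{L^\infty}\leq C$ and $f\in L^1((0,T)\times\O)$, the integrand is dominated by $C|f|\in L^1$, and DCT yields the first claim.

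For the second convergence I use the structure of $\phi'_m$: its support is contained in the interval $[1/(2m),1/m]$, and there $|\phi'_m|\leq 2m$. Therefore $|\rho\phi'_m(\rho)|\leq 2$ uniformly in $m$ and in $\rho\geq 0$. Pointwise, $\rho\phi'_m(\rho)\to 0$ for every $\rho\geq 0$: on $\{\rho>0\}$ the support $[1/(2m),1/m]$ eventually misses $\rho$, while on $\{\rho=0\}$ the function is already zero for every $m$. Consequently $|\rho\phi'_m(\rho)a_m f|\leq 2C|f|\in L^1$ and the integrand converges to zero a.e., so a second application of DCT gives $\int_0^T\int_\O|\rho\phi'_m(\rho)a_mf|\,dx\,dt\to 0$.

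There is no genuine obstacle: the proof reduces to verifying the a.e.\ pointwise limits and the uniform domination, after which the dominated convergence theorem does all the work. The only point deserving attention is the behavior on $\{\rho=0\}$ for the first limit, which is harmless because the lemma will always be invoked with an $f$ carrying a factor of $\rho$ (or $\sqrt{\rho}$) that vanishes on the vacuum.
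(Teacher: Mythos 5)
Your proof is correct and follows essentially the same strategy as the paper's: both reduce each limit to an application of the dominated convergence theorem, using the pointwise behaviour of $\phi_m(\rho)$ and $\rho\phi'_m(\rho)$ together with the uniform domination $|{\cdot}|\leq C|f|\in L^1$ (the paper merely splits $|\phi_m(\rho)a_mf-af|\leq |a_m||\phi_m(\rho)f-f|+|a_mf-af|$ before applying DCT, which is cosmetically different from your single application). Your explicit discussion of the vacuum set $\{\rho=0\}$ is a welcome refinement: the paper's assertion that $\phi_m(\rho)f\to f$ a.e.\ silently ignores this set, and you correctly note that the statement is only sound because every $f$ to which the lemma is applied carries a factor degenerating with $\rho$.
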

\begin{proof}We have
\begin{equation*}
|\phi_m(\rho)a_mf-af|\leq|\phi_m(\rho)f-f||a_m|+|a_mf-af|=I_1+I_2.
\end{equation*}
For $I_1$:
$\phi_m(\rho)f\to f$ a.e. for $(t,x)$
and $$|\phi_m(\rho)f-f|\leq 2 |f|\;\;\text{ a.e. for } (t,x),$$
by Lebesgue's Dominated Convergence Theorem, we conclude
$$\int_0^T\int_{\O}\left|\phi_m(\rho)f-f\right|\,dx\,dt\to 0$$
as $m\to\infty,$
which in turn yields \begin{equation*}
\begin{split}&\int_0^T\int_{\O}|\phi_m(\rho)a_mf-a_mf|\,dx\,dt
\\&\leq\|a_m\|_{L^{\infty}(0,T;\O)}\int_0^T\int_{\O}|\phi_m(\rho)f-f|\,dx\,dt
\\&\to0
\end{split}
\end{equation*}
as $ m\to \infty.$
Following the same line, we have $$\int_0^T\int_{\O}|a_mf-af|\,dx\,dt\to0$$ as $m\to\infty.$ Thus we have
$$\int_0^T\int_{\O}\phi_m(\rho)a_mf\,dx\,dt\to\int_0^T\int_{\O}af\,dx\,dt$$
as $  m\to\infty.$\\
We now consider
$\int_0^T\int_{\O}|\rho\phi'_m(\rho)a_mf|\,dx\,dt$.
Notice that $|\rho\phi'_m(\rho)|\leq C$, and $\rho\phi_m'(\rho)$ converges to $0$ almost everywhere, so $\left|\rho\phi'_m(\rho)a_mf\right|\leq C|f|$, and by Lebesgue's Dominated Convergence Theorem,
 $$\int_0^T\int_{\O}|\rho\phi'_m(\rho)a_mf|\,dx\,dt\to 0$$
 as $ m\to\infty.$
\end{proof}
Calculating
\begin{equation}
\begin{split}
\label{Sm term}
&\int_0^T\int_{\O}\psi(t)\mathbf{S}_m:\nabla(\varphi'_n(\v_m))\,dx\,dt
\\&=\int_0^T\int_{\O}\psi(t)\mathbf{S}_m\varphi''_n(\v_m)\left(\nabla\phi_m\phi_K\u+\phi_m\nabla\phi_K\u+\phi_m\phi_K\nabla\u\right)\,dx\,dt\\
&=\int_0^T\int_{\O}\phi_m(\rho) a_{m1}f_{1}\,dx\,dt+\int_0^T\int_{\O}\rho\phi'_m(\rho) a_{m2}f_{2}\,dx\,dt,
\end{split}
\end{equation}
where $$a_{m1}=\phi_m(\rho)\varphi''_n(\v_m),$$
$$f_{1}=\psi(t)\rho\phi_K(\rho)\left(\mathbb{D}\u+\kappa\frac{\D\sqrt{\rho}}{\sqrt{\rho}}\mathbb{I}\right)(\u \nabla\phi_K(\rho)+\phi_K(\rho)\nabla\u),$$
and $$a_{m2}=\varphi''_n(\v_m)\phi_m(\rho)\phi_K(\rho)\u=\varphi''_n(\v_m)\v_m,$$
$$f_{2}=\psi(t)\phi_K(\rho)\left(\mathbb{D}\u+\kappa\frac{\D\sqrt{\rho}}{\sqrt{\rho}}\mathbb{I}\right)\nabla\rho=2\psi(t)\phi_K(\rho)(\kappa\Delta\sqrt{\rho}\nabla\sqrt{\rho}+\sqrt{\rho}\mathbb{D}\u\nabla\sqrt{\rho}).$$
So applying Lemma \ref{lemma for the limit as m goes to infinity} to \eqref{Sm term}, we have
$$\int_0^T\int_{\O}\psi(t)\mathbf{S}_m:\nabla(\varphi'_n(\v_m))\,dx\,dt\to \int_0^T\int_{\O}\psi(t)\mathbf{S}:\nabla(\varphi'_n(\phi_K(\rho)\u))\,dx\,dt$$
as $m\to\infty,$ where $\mathbf{S}=\phi_K(\rho)\rho(\mathbb{D}\u+\kappa\frac{\D\sqrt{\rho}}{\sqrt{\rho}}\mathbb{I}).$

Letting $F_m=F_{m1}+F_{m2},$
where \begin{equation*}
\begin{split}
F_{m1}&=\rho^2\u\phi'(\rho)\Dv\u
+\rho\nabla\phi(\rho)\mathbb{D}\u
+\kappa\sqrt{\rho}\nabla\phi(\rho)\Delta\sqrt{\rho}
\\&=\rho\left(\phi'_m(\rho)\phi_K(\rho)+\phi_m(\rho)\phi'_K(\rho)\right)(\rho\u\Dv\u+\nabla\rho\cdot\mathbb{D}\u+\kappa\nabla\rho\frac{\D\sqrt{\rho}}{\sqrt{\rho}}),
\end{split}
\end{equation*}
where $$\phi_K(\rho)(\rho\u\Dv\u+\nabla\rho\cdot\mathbb{D}\u+\kappa\nabla\rho\frac{\D\sqrt{\rho}}{\sqrt{\rho}})\in L^1((0,T)\times\O),$$
$$\rho\phi'_K(\rho)(\rho\u\Dv\u+\nabla\rho\cdot\mathbb{D}\u+\kappa\nabla\rho\frac{\D\sqrt{\rho}}{\sqrt{\rho}})\in L^1((0,T)\times\O),$$
and
\begin{equation*}
\begin{split}
F_{m2}&=\phi_m(\rho)\phi_K(\rho)(2\rho^{\frac{\gamma}{2}}\nabla\rho^{\frac{\gamma}{2}}
+r_0\u
+r_1\rho|\u|^2\u
+2\kappa\nabla\sqrt{\rho}\Delta\sqrt{\rho}),
\end{split}
\end{equation*}
where $$ \phi_K(\rho)\left(2\rho^{\frac{\gamma}{2}}\nabla\rho^{\frac{\gamma}{2}}
+r_0\u
+r_1\rho|\u|^2\u
+2\kappa\nabla\sqrt{\rho}\Delta\sqrt{\rho}\right)\in L^1((0,T)\times\O).$$

Applying Lemma \ref{lemma for the limit as m goes to infinity}, we have
\begin{equation*}
\int_0^T\int_{\O}\psi(t)\varphi'_n(\v_m)F_m\,dx\,dt\to \int_0^T\int_{\O}\psi(t)\varphi'_n(\phi_K(\rho)\u)F\,dx\,dt,
\end{equation*}
where\begin{equation*}
\begin{split}
F&=\rho^2\u\phi_K'(\rho)\Dv\u+2\rho^{\frac{\gamma}{2}}\nabla\rho^{\frac{\gamma}{2}}\phi_K(\rho)
+\rho\nabla\phi_K(\rho)\mathbb{D}\u+r_0\u\phi_K(\rho)
\\&+r_1\rho|\u|^2\u\phi_K(\rho)
+\kappa\sqrt{\rho}\nabla\phi_K(\rho)\Delta\sqrt{\rho}+2\kappa\phi_K(\rho)\nabla\sqrt{\rho}\Delta\sqrt{\rho}.
\end{split}
\end{equation*}
Letting $m\to\infty$ in \eqref{the second level of MV inequality before limit}, we have
\begin{equation*}
\begin{split}
&-\int_0^T\int_{\O}\psi'(t)(\rho\varphi_n(\phi_K(\rho)\u))\,dx\,dt+\int_0^T\int_{\O}\psi(t)\varphi'_n(\phi_K(\rho)\u)F\,dx\,dt
\\&+\int_0^T\int_{\O}\psi(t)\mathbf{S}:\nabla(\varphi'_n(\phi_K(\rho)\u))\,dx\,dt=\int_{\O}\psi(0)\rho_0\varphi_n(\phi_K(\rho_0)\u_0)\,dx,
\end{split}
\end{equation*}
which in turn gives us the following lemma:
\begin{Lemma}
For any weak solutions to \eqref{last level} verifying in Proposition \ref{pro weak solutions}, we have
\begin{equation}
\label{the second level of MV inequality after limit}
\begin{split}
&-\int_0^T\int_{\O}\psi'(t)(\rho\varphi_n(\phi_K(\rho)\u))\,dx\,dt+\int_0^T\int_{\O}\psi(t)\varphi'_n(\phi_K(\rho)\u)F\,dx\,dt
\\&+\int_0^T\int_{\O}\psi(t)\mathbf{S}:\nabla(\varphi'_n(\phi_K(\rho)\u))\,dx\,dt=\int_{\O}\psi(0)\rho_0\varphi_n(\phi_K(\rho_0)\u_0)\,dx,
\end{split}
\end{equation}
where $\mathbf{S}=\phi_K(\rho)\rho(\mathbb{D}\u+\kappa\frac{\Delta\sqrt{\rho}}{\sqrt{\rho}}\mathbb{I}),$ and
\begin{equation*}
\begin{split}
F&=\rho^2\u\phi_K'(\rho)\Dv\u+2\rho^{\frac{\gamma}{2}}\nabla\rho^{\frac{\gamma}{2}}\phi_K(\rho)
+\rho\nabla\phi_K(\rho)\mathbb{D}\u+r_0\u\phi_K(\rho)
\\&+r_1\rho|\u|^2\u\phi_K(\rho)
+\kappa\sqrt{\rho}\nabla\phi_K(\rho)\Delta\sqrt{\rho}+2\kappa\phi_K(\rho)\nabla\sqrt{\rho}\Delta\sqrt{\rho}.
\end{split}
\end{equation*}
where $\mathbb{I}$ is an identical matrix.
\end{Lemma}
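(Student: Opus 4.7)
The plan is to pass to the limit $m\to\infty$ in the identity \eqref{the second level of MV inequality before limit} obtained by the previous lemma, holding $\kappa>0$, $K>0$, $r_0>0$, $r_1>0$ and the test function $\psi$ fixed. The two structural facts driving everything are: $\phi_m(\rho)\to 1$ almost everywhere and is uniformly bounded, while $\rho\phi_m'(\rho)$ is bounded by $1$ (because $|\phi_m'|\le 2m$ is supported on $\{1/(2m)\le \rho\le 1/m\}$) and converges to $0$ almost everywhere. So the task reduces to showing that these two almost-everywhere convergences may be inserted inside each of the integrals that appear.

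I would first handle the basic convergence of $\v_m:=\phi_m(\rho)\phi_K(\rho)\u$. Since $r_0>0$, the energy estimate in \eqref{a priori estimate from energy} gives $\u\in L^2((0,T)\times\O)$, so dominated convergence yields $\v_m\to\phi_K(\rho)\u$ in $L^2((0,T)\times\O)$. Because $\varphi_n,\varphi_n'$ are globally bounded and Lipschitz by Lemma \ref{Lemma for the cut function}, this upgrades to $\varphi_n(\v_m)\to\varphi_n(\phi_K(\rho)\u)$ in every $L^p$ with $p<\infty$, and the same at $t=0$. That immediately settles the time-derivative term and the initial datum term in \eqref{the second level of MV inequality before limit}.

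Next I would treat the $\mathbf{S}_m$ and $F_m$ integrals by expanding them so that every occurrence of $\phi_m'(\rho)$ appears paired with a factor of $\rho$. For the spatial term I write $\nabla\v_m=(\phi_m'(\rho)\phi_K(\rho)+\phi_m(\rho)\phi_K'(\rho))\nabla\rho\otimes\u+\phi_m(\rho)\phi_K(\rho)\nabla\u$, and combine the $\phi_m'(\rho)\nabla\rho\otimes\u$ piece with the $\rho$ factor inside $\mathbf{S}_m$ to produce $\rho\phi_m'(\rho)$ times an $L^1((0,T)\times\O)$ quantity built from $\sqrt{\rho}\mathbb{D}\u\,\nabla\sqrt{\rho}$ and $\kappa\,\Delta\sqrt{\rho}\,\nabla\sqrt{\rho}$ (this uses \eqref{estimate of second derivative density}, \eqref{estimate on u} and the $L^4$ bound \eqref{J inequality for weak solutions} on $\nabla\rho^{1/4}$). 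For $F_m$, I split $F_m=F_{m1}+F_{m2}$, where $F_{m1}$ collects every term carrying a $\nabla\phi(\rho)$ or $\rho\u\phi'(\rho)\Dv\u$ (each yielding a $\rho\phi_m'$ after regrouping), and $F_{m2}$ collects the terms with the pure cutoff $\phi_m(\rho)\phi_K(\rho)$. After checking that each factor lies in $L^1((0,T)\times\O)$ uniformly in $m$, Lemma \ref{lemma for the limit as m goes to infinity} applies term by term and delivers exactly \eqref{the second level of MV inequality after limit}.

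The main obstacle, as throughout this section, is the careful bookkeeping needed to ensure every product is truly in $L^1((0,T)\times\O)$ with a bound uniform in $m$. The danger is the quantum-capillarity term involving $\Delta\sqrt{\rho}/\sqrt{\rho}$, which is only well-defined after multiplication by a matching $\sqrt{\rho}$, so derivatives $\phi_m'$ must never end up on factors that become singular near the vacuum. The pairing $\phi_m'(\rho)\cdot\rho$ is precisely what avoids this, and the uniform bound $|\rho\phi_m'(\rho)|\le 1$ is what makes the second statement of Lemma \ref{lemma for the limit as m goes to infinity} directly applicable to every such term.
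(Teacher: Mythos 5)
Your proposal is correct and follows essentially the same route as the paper: you identify the same two driving facts ($\phi_m(\rho)\to 1$ a.e.\ and bounded, $|\rho\phi_m'(\rho)|\le 1$ with $\rho\phi_m'(\rho)\to 0$ a.e.), use $r_0>0$ to place $\u$ in $L^2$ and deduce $\v_m\to\phi_K(\rho)\u$, perform the same regrouping of $\mathbf{S}_m$ and the same $F_{m1}/F_{m2}$ split so that every occurrence of $\phi_m'$ is paired with a factor $\rho$, and then invoke Lemma \ref{lemma for the limit as m goes to infinity} term by term. This is precisely the paper's Section 3 argument.
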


\section{Recover the limits as $\kappa\to0$ and $K\to\infty$.}
The objective of this section is to recover the limits in \eqref{the second level of MV inequality after limit} as $\kappa\to0$ and $K\to\infty.$ In this section, we assume that $K=\kappa^{-\frac{3}{4}},$ thus $K\to\infty$ when $\kappa\to0.$  First, we address the following lemma.
\begin{Lemma}
\label{lemma on vanishing kappa}
Let $\kappa\to0$ and $K\to\infty$, and denote $\v_{\kappa}=\phi_K(\rho_{\kappa})\u_{\kappa}$, we have
\begin{equation}
\begin{split}
\label{density 5/3}&\rho^{\gamma}_{\kappa} \;\;\text{  is bounded in } L^r((0,T)\times\O)\;\text{ for any } 1\leq r<2 \;\text{ in } 2D,
\\&\text{and any } 1<r<\frac{5}{3} \;\text{ in }3D.
\end{split}
\end{equation}
\\ For any $g\in C^1(\R^+)$ with $g$ bounded, and $0<\alpha< \infty$ in 2D, $0<\alpha<\frac{5\gamma}{3}$ in 3D,
we have \begin{equation*}
\rho^{\alpha}_{\kappa}g(|\v_{\kappa}|^2)\to \rho^{\alpha} g(|\u|^2);\;\;\text{ strongly in } L^1((0,T)\times\O).
\end{equation*}
In particular, we have, for any fixed $n$,
\begin{equation}
\label{convergence 1 for kappa}
\rho_{\kappa}\varphi_n(\v_{\kappa})\to\rho\varphi_n(\u)\;\;\;\text{ strongly in } L^1((0,T)\times\O),
\end{equation}
and
\begin{equation}
\label{convergence 2 for kappa}
\rho^{2\gamma-1}_{\kappa}(1+\tilde{\varphi}'_n(|\v_{\kappa}|^2))\to
\rho^{2\gamma-1}(1+\tilde{\varphi}'_n(|\u|^2))\;\;\;\text{ strongly in } L^{1}((0,T)\times\O),
\end{equation}for $1<\gamma<3.$

\end{Lemma}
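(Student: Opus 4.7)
The plan is to establish the three assertions in order, with the $L^r$ bound \eqref{density 5/3} serving as a technical ingredient for the strong $L^1$ convergence and its two corollaries.

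For \eqref{density 5/3} I would combine the BD entropy estimate \eqref{estimate for presure}, which gives $\nabla \rho_\kappa^{\gamma/2}$ bounded in $L^2(0,T;L^2(\O))$ uniformly in $\kappa$, with the mass/pressure energy bound in \eqref{a priori estimate from energy}, which controls $\rho_\kappa^{\gamma/2}$ in $L^\infty(0,T;L^2(\O))$. In 3D the Sobolev embedding $H^1\hookrightarrow L^6$ yields $\rho_\kappa^{\gamma/2}\in L^2(0,T;L^6(\O))$, and Hölder interpolation with the $L^\infty_tL^2_x$ bound produces $\rho_\kappa^{\gamma/2}\in L^{10/3}((0,T)\times\O)$, hence $\rho_\kappa^\gamma\in L^{5/3}((0,T)\times\O)$, so in every $L^r$ with $r<5/3$. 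In 2D the same scheme using $H^1\hookrightarrow L^p$ for every finite $p$ and interpolation yields $\rho_\kappa^\gamma\in L^r$ for every $r<2$, uniformly in $\kappa$.

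For the main convergence, the strategy is Vitali's theorem: obtain pointwise a.e.\ convergence and equi-integrability separately. For the pointwise part, Proposition \ref{pro weak solutions} gives $\sqrt{\rho_\kappa}\u_\kappa\to \sqrt{\rho}\u$ strongly in $L^2((0,T)\times\O)$, while $(\sqrt{\rho_\kappa})_t\in L^2_{t,x}$ together with $\nabla\sqrt{\rho_\kappa}$ bounded in $L^\infty_tL^2_x$ gives compactness of $\sqrt{\rho_\kappa}$ via Aubin--Lions, so up to a subsequence $\rho_\kappa\to \rho$ a.e. On the set $\{\rho>0\}$, the density is bounded away from $0$ in a neighborhood for $\kappa$ small, so dividing $\sqrt{\rho_\kappa}\u_\kappa$ by $\sqrt{\rho_\kappa}$ yields $\u_\kappa\to \u$ a.e.\ there; since $K=\kappa^{-3/4}\to\infty$ and $\rho$ is finite a.e., also $\phi_K(\rho_\kappa)\to 1$ a.e., hence $\v_\kappa\to \u$ and $g(|\v_\kappa|^2)\to g(|\u|^2)$ a.e.\ on $\{\rho>0\}$. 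On the vacuum $\{\rho=0\}$ we have $\rho_\kappa^\alpha\to 0$ a.e., and boundedness of $g$ forces the integrand to vanish, matching $\rho^\alpha g(|\u|^2)=0$. For equi-integrability, write $\rho_\kappa^\alpha=(\rho_\kappa^\gamma)^{\alpha/\gamma}$; by \eqref{density 5/3}, $\rho_\kappa^\alpha$ is bounded in some $L^q((0,T)\times\O)$ with $q>1$ precisely when $\alpha<5\gamma/3$ in 3D (resp.\ $\alpha<\infty$ in 2D), and $\|g\|_\infty$ finishes the job. Vitali then delivers the strong $L^1$ convergence.

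The two corollaries follow by specialization. For \eqref{convergence 1 for kappa} take $\alpha=1$ and $g=\tilde{\varphi}_n$, which is bounded by Lemma \ref{Lemma for the cut function}\,e. For \eqref{convergence 2 for kappa} take $\alpha=2\gamma-1$ and $g=1+\tilde{\varphi}_n'$, bounded by Lemma \ref{Lemma for the cut function}\,c; the condition $\alpha<5\gamma/3$ becomes $2\gamma-1<5\gamma/3$, equivalent to $\gamma<3$, matching the hypothesis exactly. I expect the main subtlety to be the recovery of a.e.\ convergence of $\u_\kappa$ on $\{\rho>0\}$, since $\u_\kappa$ is only controlled through $\sqrt{\rho_\kappa}\u_\kappa$; the argument relies on the strong $L^2$ convergence from \eqref{property-2} and on $\rho_\kappa$ staying uniformly positive along a subsequence at almost every point of $\{\rho>0\}$, while on the vacuum the weight $\rho_\kappa^\alpha$ absorbs any pathology.
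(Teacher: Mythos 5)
Your overall strategy matches the paper's proof: establish the uniform $L^r$ integrability \eqref{density 5/3} from the energy bound $\rho_\kappa^{\gamma/2}\in L^\infty_t L^2_x$ and the BD bound $\nabla\rho_\kappa^{\gamma/2}\in L^2_{t,x}$ via Sobolev and interpolation; obtain a.e.\ convergence by compactness of the density, splitting into $\{\rho>0\}$ (where one divides $\sqrt{\rho_\kappa}\u_\kappa$ by $\sqrt{\rho_\kappa}$ and uses $\phi_K(\rho_\kappa)\to 1$ since $K\to\infty$) and $\{\rho=0\}$ (where $\rho_\kappa^\alpha$ absorbs everything); and then pass a.e.\ convergence plus uniform $L^r$ boundedness, $r>1$, to strong $L^1$ convergence. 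The specializations $\alpha=1$, $g=\tilde\varphi_n$ for \eqref{convergence 1 for kappa} and $\alpha=2\gamma-1$, $g=1+\tilde\varphi_n'$ for \eqref{convergence 2 for kappa}, with the constraint $2\gamma-1<5\gamma/3\Leftrightarrow\gamma<3$ in 3D, are exactly the paper's.

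One detail to fix: you invoke $(\sqrt{\rho_\kappa})_t\in L^2((0,T)\times\O)$ as input to Aubin--Lions for $\sqrt{\rho_\kappa}$. That property appears in \eqref{property} as a qualitative statement for each fixed weak solution, but its derivation in \cite{VY-1}/\cite{J} relies on the $\kappa$-term (through $\kappa^{1/2}\|\sqrt\rho\|_{L^2_tH^2_x}\le C$ and $\kappa^{1/4}\|\nabla\rho^{1/4}\|_{L^4_{t,x}}\le C$), so the bound is not obviously uniform as $\kappa\to 0$. The paper sidesteps this: Lemma \ref{lemma of estimates for positive kappa} establishes, \emph{uniformly in} $\kappa$, that $(\rho_\kappa)_t$ is bounded in $L^4(0,T;L^{6/5})+L^2(0,T;L^{3/2})$ (using $\nabla\sqrt{\rho_\kappa}\in L^\infty_tL^2_x$ and $\sqrt{\rho_\kappa}\Dv\u_\kappa\in L^2_{t,x}$), and together with $\nabla\rho_\kappa\in L^\infty(0,T;L^{3/2})$ applies Aubin--Lions to $\rho_\kappa$ to get $\rho_\kappa\to\rho$ strongly in $L^p(0,T;L^{3/2})$, hence a.e., whence $\sqrt{\rho_\kappa}\to\sqrt\rho$ a.e.\ by continuity of the square root. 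Either replace your compactness step with this argument, or check that the time-derivative bound you quote is indeed $\kappa$-uniform; as written, this is the one spot where the justification is incomplete.
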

\begin{proof}

In 2D, we deduce that $$\rho^{\gamma}_{\kappa}\in L^{\infty}(0,T:L^1(\O))\cap L^1(0,T;L^p(\O))\;\;\text{ for any }1\leq p<\infty.$$ Thus, $\rho^{\gamma}_{\kappa}$ is bounded in
$L^r((0,T)\times\O)$ for $1\leq r<2.$

In 3D, we deduce that $$\rho^{\gamma}_{\kappa}\in L^{\infty}(0,T:L^1(\O))\cap L^1(0,T;L^3(\O)).$$
Applying Holder inequality, we have
$$\|\rho^{\gamma}_{\kappa}\|_{L^{\frac{5}{3}}((0,T)\times\O)}\leq \|\rho^{\gamma}_{\kappa}\|^{\frac{2}{5}}_{L^{\infty}(0,T;L^1(\O))} \|\rho^{\gamma}_{\kappa}\|^{\frac{3}{5}}_{L^{1}(0,T;L^3(\O))}.$$
 Thus, $\rho^{\gamma}_{\kappa}$ is bounded in
$L^{\frac{5}{3}}((0,T)\times\O)$.

 We have that $(\rho_{\kappa})_t$ is uniformly bounded in
 \begin{equation*}
L^4(0,T;L^{6/5}(\O))+L^2(0,T;L^{3/2}(\O)),
\end{equation*}
 thanks to Lemma \ref{lemma of estimates for positive kappa};
  and also we have
$$\|\nabla\rho_{\kappa}\|_{L^{\infty}(0,T;L^{3/2}(\O))}\leq C.$$
Applying Aubin-Lions Lemma,
one obtains $$\rho_{\kappa}\to \rho\;\;\;\text{ strongly in } L^{p}(0,T;L^{3/2}(\O))\quad\text{ for } p<\infty.$$
When $\kappa\to 0,$ we have $\sqrt{\rho_{\kappa}}\u_{\kappa}\to \sqrt{\rho}\u\;\;\text{strongly in } L^2(0,T;L^2(\O))$ in Proposition \ref{pro weak solutions}, ( also see \cite{VY-1}).
Thus, up to a subsequence,  for almost every $(t,x)$ such that $\rho(t,x)\neq0$,
we have $$\u_{\kappa}(t,x)=\frac{\sqrt{\rho_{\kappa}}\u_{\kappa}}{\sqrt{\rho_{\kappa}}}\to \u(t,x),$$
and $$\v_{\kappa}\to \u(t,x),$$
as $\kappa\to 0.$
For almost every $(t,x)$ such that $\rho(t,x)=0,$
\begin{equation}
\label{convergence kappa level}
\left|\rho^{\alpha}_{\kappa}g(|\v_{\kappa}|^2)\right|
\leq C\rho^{\alpha}_{\kappa}(t,x)\to 0=\rho^{\alpha}g(|\u|^2)
\end{equation}
as $\kappa\to0.$\\
Hence, $\rho^{\alpha}_{\kappa}g(|\v_{\kappa}|^2)$ converges to $\rho^{\alpha}g(|\u|^2)$ almost everywhere. Since $g$ is bounded and \eqref{density 5/3}, $\rho^{\alpha}_{\kappa} g(|\v_{\kappa}|^2)$ is uniformly bounded in
$L^r((0,T)\times\O)$ for some $r>1.$ Hence, $$\rho^{\alpha}_{\kappa}g(|\v_{\kappa}|^2)\to\rho^{\alpha}g(|\u|^2)\;\;\text{ in } L^1((0,T)\times\O).$$
By the uniqueness of the limit, the convergence holds for the whole sequence.

Applying this result with $\alpha=1$ and $g(|\v_{\kappa}|^2)=\varphi_n(\v_{\kappa})$, we deduce \eqref{convergence 1 for kappa}.

Since $\gamma>1$ in 2D, we can take $\alpha=2\gamma-1<2\gamma$; and take $\gamma<3$ in 3D, we have  $2\gamma-1<\frac{5\gamma}{3}$. Thus
 we use the above result with $\alpha=2\gamma-1$ and $g(|\v_{\kappa}|^2)=1+\tilde{\varphi}'_n(|\v_{\kappa}|^2)$ to obtain \eqref{convergence 2 for kappa}.

\end{proof}

 With the lemma in hand, we are ready to recover the limits
in \eqref{the second level of MV inequality after limit} as $\kappa\to0$ and $K\to\infty.$ We have the following lemma.
\begin{Lemma}
\label{Lemma kappa goes to 0}
Let $K=\kappa^{-\frac{3}{4}}$, and $\kappa\to 0,$ for any $\psi\geq 0$ and $\psi'\leq 0$, we have
\begin{equation}
\begin{split}
\label{MV before n goes to infinity}
&-\int_0^T\int_{\O}\psi'(t)\rho\varphi_n(\u)\,dx\,dt
\\&\leq
8\|\psi\|_{L^{\infty}}\left(
\int_{\O}\left(\rho_0|\u_0|^2+\frac{\rho^{\gamma}_0}{\gamma-1}+|\nabla\sqrt{\rho_0}|^2-r_0\log_{-}\rho_0\right)\,dx+2 E_0\right)
\\&+
C(\|\psi\|_{L^{\infty}})\int_0^T \int_{\O}(1+\tilde{\varphi}'_n(|\u|^2))\rho^{2\gamma-1} \,dx\,dt+\psi(0)\int_{\O}\rho_0\varphi_n(\u_0)\,dx
\end{split}
\end{equation}

\end{Lemma}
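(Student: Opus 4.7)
The plan is to start from the equality \eqref{the second level of MV inequality after limit}, valid at the level $\kappa>0$ with $K<\infty$, to isolate $-\int_0^T\!\!\int_{\O}\psi'\rho\varphi_n(\v)\,dx\,dt$ on the left, to bound every remaining term uniformly in $\kappa$ under the calibration $K=\kappa^{-3/4}$, and then to pass to the limit $\kappa\to 0$ via Lemma \ref{lemma on vanishing kappa}.

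For the stress term, using identity \eqref{second deritative} I would decompose
\begin{equation*}
\mathbf{S}:\nabla\varphi_n'(\v) \;=\; 2\tilde\varphi_n'(|\v|^2)\,\mathbf{S}:\nabla\v \;+\; 2\tilde\varphi_n''(|\v|^2)\,(\mathbf{S}\v)\cdot\nabla|\v|^2.
\end{equation*}
Expanding $\nabla\v = \phi_K\nabla\u + \phi_K'\nabla\rho\otimes\u$ and $\mathbf{S} = \phi_K\rho\mathbb{D}\u + \kappa\phi_K\sqrt{\rho}\Delta\sqrt{\rho}\,\mathbb{I}$, the first summand produces the nonnegative coercive contribution $2\tilde\varphi_n'(|\v|^2)\phi_K^2\rho|\mathbb{D}\u|^2\ge 0$, which after moving to the right with $\psi\ge 0$ I drop in my favor, together with mixed terms either supported in $\{K\le\rho\le 2K\}$ (from $\phi_K'$) or carrying a factor $\kappa\sqrt\rho\Delta\sqrt\rho$. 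The second summand is bounded pointwise by $4|\mathbf{S}||\nabla\v|$ thanks to $|\tilde\varphi_n''(y)y|\le 1$. All remaining contributions are handled by Young's inequality, absorbing one factor into $\rho|\mathbb{D}\u|^2$ (controlled by \eqref{estimate on u}) or $\kappa\rho|\nabla^2\log\rho|^2$ (controlled by \eqref{estimate of second derivative density}), with weights chosen so that the global coefficient in front of the BD bound is exactly $8\|\psi\|_{L^\infty}$ as required.

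For $F\cdot\varphi_n'(\v)$, I would split $F$ into four groups: (i) the pressure $2\rho^{\gamma/2}\nabla\rho^{\gamma/2}\phi_K$, (ii) the drag $r_0\u\phi_K + r_1\rho|\u|^2\u\phi_K$, (iii) the cutoff pieces $\rho^2\u\phi_K'\Dv\u + \rho\nabla\phi_K\cdot\mathbb{D}\u$, and (iv) the capillarity $\kappa\sqrt\rho\nabla\phi_K\Delta\sqrt\rho + 2\kappa\phi_K\nabla\sqrt\rho\Delta\sqrt\rho$. For (i), integration by parts transfers the derivative off $\rho^\gamma$ and Young's inequality delivers the key term $C(\|\psi\|_{L^\infty})\int_0^T\!\!\int_{\O}(1+\tilde\varphi_n'(|\u|^2))\rho^{2\gamma-1}\,dx\,dt$. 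For (ii), I use $\sqrt{r_0}\u\in L^2$ and $\sqrt[4]{r_1\rho}\u\in L^4$ from \eqref{a priori estimate from energy} together with the uniform $L^\infty$ bound on $\varphi_n'(\v)/(1+|\v|)$ to absorb into the $8\|\psi\|_{L^\infty}$ BD bound. For (iii), $|\phi_K'|\le 2/K$ on $\{K\le\rho\le 2K\}$ combined with $\sqrt\rho\nabla\u\in L^2$ and $\nabla\rho\in L^2$ force these pieces to vanish as $K\to\infty$. For (iv), writing $\Delta\sqrt\rho$ in terms of $\sqrt\rho\,\Delta\log\rho$ and $\sqrt\rho\,|\nabla\log\rho|^2$, the BD estimate $\|\sqrt{\kappa\rho}\nabla^2\log\rho\|_{L^2L^2}\le C$ together with the $\phi_K$-cutoff gives an upper bound scaling as a positive power of $K\sqrt\kappa$, and with $K=\kappa^{-3/4}$ this tends to zero.

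To pass to the limit, the left-hand side converges by \eqref{convergence 1 for kappa}, the initial data converges since $\phi_K(\rho_0)\u_0\to\u_0$ almost everywhere and $\varphi_n$ is continuous and bounded by $e(1+n)^2-2n-2$ (dominated convergence with $\rho_0\in L^1$ as envelope), and the pressure integral converges by \eqref{convergence 2 for kappa}. The BD upper bounds persist by uniformity of the estimates in Proposition \ref{pro weak solutions}. Assembling every piece yields \eqref{MV before n goes to infinity}. The main obstacle is the quantitative matching of scales between the cutoff $\phi_K$ and the $\kappa$-dependent regularity of $\sqrt\rho$: the exponent $-3/4$ in $K=\kappa^{-3/4}$ is tuned precisely so that the capillarity terms, which are only bounded through $\sqrt{\kappa\rho}\nabla^2\log\rho\in L^2$, lose to the cutoff support and disappear in the limit; any weaker rate on $K$ would destroy this balance.
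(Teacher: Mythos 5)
Your overall scheme (isolate the time-derivative term, estimate the remaining terms uniformly in $\kappa$ under $K=\kappa^{-3/4}$, pass to the limit with Lemma \ref{lemma on vanishing kappa}) follows the paper's route, but there is a genuine gap in the way you propose to balance the pressure and stress contributions, and it is not cosmetic.

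After integrating by parts, the pressure term produces (among others) a piece of the form
\begin{equation*}
2\int_0^T\!\!\int_{\O}\psi(t)\,\phi_K(\rho_\kappa)^2\,\tilde\varphi_n'(|\v_\kappa|^2)\,\rho_\kappa^\gamma\,\Dv\u_\kappa\,dx\,dt ,
\end{equation*}
coming from the $\tilde\varphi_n'(|\v|^2)\mathbf I$ block of $\varphi_n''$. The weight $\tilde\varphi_n'$ is \emph{not} uniformly bounded in $n$ — by part c.\ of Lemma \ref{Lemma for the cut function} it grows like $1+\ln(1+n)$. Applying Young's inequality to split $\rho_\kappa^\gamma|\Dv\u_\kappa|=\rho_\kappa^{1/2}|\Dv\u_\kappa|\cdot\rho_\kappa^{\gamma-1/2}$ produces the term $C\int(1+\tilde\varphi_n')\rho_\kappa^{2\gamma-1}$ that you want, but it also produces
\begin{equation*}
2\int_0^T\!\!\int_{\O}\psi(t)\,\phi_K^2\,\tilde\varphi_n'(|\v_\kappa|^2)\,\rho_\kappa|\mathbb D\u_\kappa|^2\,dx\,dt ,
\end{equation*}
still carrying the factor $\tilde\varphi_n'$. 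This cannot be bounded by the BD estimate $\int\rho|\nabla\u|^2\le C$ with a constant independent of $n$. The paper's mechanism is to \emph{not} discard the positive coercive contribution $2\int\psi\,\tilde\varphi_n'(|\v_\kappa|^2)\phi_K^2\rho_\kappa|\mathbb D\u_\kappa|^2$ from the stress term $A_1$ in \eqref{inequality A1}--\eqref{A1 absorb term}: that piece is term-for-term identical to the problematic pressure piece and the two are matched to cancel exactly (see the remark "the second right hand side term will be absorbed by the dispersion term $A_1$" below \eqref{P11}). Your plan drops this coercive contribution "in your favor", which severs the cancellation; the residual pressure term then carries an $n$-dependent constant in front of the energy bound, and the subsequent limit $n\to\infty$ in Section 5 would fail. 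You need to keep the coercive stress term alive and pair it against the pressure piece before discarding anything.

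A secondary but instructive point: for the drag terms you propose to use the $L^2/L^4$ bounds from \eqref{a priori estimate from energy} together with an $L^\infty$ bound on $\varphi_n'(\v)/(1+|\v|)$. This is more work than needed. Since $\varphi_n'(\v)\cdot\u = 2\tilde\varphi_n'(|\v|^2)\phi_K(\rho)|\u|^2\ge 0$ and $\psi\ge 0$, the full drag contribution $\int\psi\,\varphi_n'(\v_\kappa)\cdot(r_0\u_\kappa+r_1\rho_\kappa|\u_\kappa|^2\u_\kappa)$ is nonnegative and can be dropped outright without invoking any quantitative bound — which is exactly what the paper does. This keeps the final estimate transparently uniform in $r_0,r_1$.
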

\begin{proof}Here, we use $(\rho_{\kappa},\u_{\kappa})$ to denote
  the weak solutions to \eqref{last level} verifying Proposition \ref{pro weak solutions} with $\kappa>0$.

 By Lemma \ref{lemma on vanishing kappa}, we can handle the first term in \eqref{the second level of MV inequality after limit}, that is,
\begin{equation}
\label{convergence of the first term in section 4}
\int_0^T\int_{\O}\psi'(t)(\rho_{\kappa}\varphi_n(\v_{\kappa}))\,dx\,dt\to
\int_0^T\int_{\O}\psi'(t)(\rho\varphi_n(\u))\,dx\,dt
\end{equation}
and \begin{equation}
\psi(0)\int_{\O}\rho_0\varphi'_n(\v_{\kappa,0})\,dx\to \psi(0)\int_{\O}\rho_0\varphi'_n(\u_0)\,dx
\end{equation}
as $\kappa\to 0$ and $K=\kappa^{-\frac{3}{4}}\to\infty.$

\begin{equation}
\begin{split}
&\int_0^T\int_{\O}\psi(t)\varphi'_n(\v_{\kappa})\cdot\nabla\rho_{\kappa}^{\gamma}\phi_K(\rho_{\kappa})\,dx\,dt=
\\&-\int_0^T\int_{\O}\psi(t)\rho_{\kappa}^{\gamma}\phi_K(\rho_{\kappa})\varphi_n^{''}:\nabla\v_{\kappa}\,dx\,dt-\int_0^T\int_{\O}\psi(t)\rho_{\kappa}^{\gamma}\varphi_n'(\v_{\kappa})\cdot\nabla\phi_K(\rho_{\kappa})\,dx\,dt
\\&=P_1+P_2.
\end{split}
\end{equation}
We can control $P_2$ as follows
\begin{equation}
\begin{split}
|P_2|&\leq \|\psi\|_{L^{\infty}}\int_0^T\int_{\O}|\rho_{\kappa}^{\gamma}||\varphi_n'||\nabla\phi_K(\rho_{\kappa})|\,dx\,dt
\\&\leq C(n,\|\psi\|_{L^{\infty}})\kappa^{-\frac{1}{4}}\|\phi_K'\sqrt{\rho_{\kappa}}\|_{L^{\infty}}\|\rho_{\kappa}^{\gamma+\frac{1}{4}}\|_{L^{\frac{4}{3}}(0,T;L^{\frac{4}{3}}(\O))}
\left(\kappa^{\frac{1}{4}}\|\nabla\rho_{\kappa}^{\frac{1}{4}}\|_{L^{4}(0,T;L^4(\O))}\right)
\\&\leq \frac{2}{\sqrt{K}} C(n,\|\psi\|_{L^{\infty}})\kappa^{-\frac{1}{4}}\|\rho_{\kappa}^{\gamma+\frac{1}{4}}\|_{L^{\frac{4}{3}}(0,T;L^{\frac{4}{3}}(\O))}\left(\kappa^{\frac{1}{4}}\|\nabla\rho_{\kappa}^{\frac{1}{4}}\|_{L^{4}(0,T;L^4(\O))}\right)
\\&\leq\frac{2C}{\sqrt{K}}\kappa^{-\frac{1}{4}}=2C\kappa^{\frac{1}{8}} \to 0
\end{split}
\end{equation}
as $\kappa\to 0,$ where we used  $\frac{4}{3}(\gamma+\frac{1}{4})\leq \frac{5\gamma}{3}$ for any $\gamma>1.$ \\
Calculating $P_1$,
\begin{equation}
\begin{split}
&P_1=-\int_0^T\int_{\O}\psi(t)\rho_{\kappa}^{\gamma}\phi_K(\rho_{\kappa})\varphi_n^{''}:\nabla\v_{\kappa}\,dx\,dt
\\&=-\int_0^T\int_{\O}\psi(t)\rho_{\kappa}^{\gamma}(\phi_K(\rho_{\kappa}))^2\varphi^{''}_n:\nabla\u_{\kappa}\,dx\,dt
\\&\quad\quad\quad
-\int_0^T\int_{\O}\psi(t)\rho_{\kappa}^{\gamma}\phi_K(\rho_{\kappa})\varphi^{''}_n:\left(\nabla\phi_K(\rho_{\kappa})\otimes\u_{\kappa}\right)\,dx\,dt
\\&=P_{11}+P_{12}.
\end{split}
\end{equation}
We bound $P_{12}$ as follows
\begin{equation}
\begin{split}
&|P_{12}|=-\int_0^T\int_{\O}\psi(t)\rho_{\kappa}^{\gamma}\phi_K(\rho_{\kappa})\varphi^{''}_n:\left(\nabla\phi_K(\rho_{\kappa})\otimes\u_{\kappa}\right)\,dx\,dt
\\&=-\int_0^T\int_{\O}\psi(t)\rho_{\kappa}^{\gamma}(\nabla\phi_K(\rho_{\kappa}))^{T}\varphi^{''}_n(\v_{\kappa})\v_{\kappa}\,dx\,dt
\\&\leq C(n,\|\psi\|_{L^{\infty}})\kappa^{-\frac{1}{4}}\|\rho_{\kappa}^{\gamma+\frac{1}{4}}\|_{L^{\frac{4}{3}}(0,T;L^{\frac{4}{3}}(\O))}\left(\kappa^{\frac{1}{4}}\|\nabla\rho^{\frac{1}{4}}_{\kappa}\|_{L^4(0,T;L^4(\O))}\right)\|\phi_K'\sqrt{\rho_{\kappa}}\|_{L^{\infty}}
\\&\leq\frac{2C}{\sqrt{K}}\kappa^{-\frac{1}{4}}=2C\kappa^{\frac{1}{8}} \to 0
\end{split}\end{equation}
as $\kappa\to0,$ where we used $|\phi_K'(\rho_{\kappa})\sqrt{\rho_{\kappa}}|\leq \frac{2}{\sqrt{K}},$ $\|\varphi^{''}_n(\v_k)\v_k\|_{L^{\infty}(0,T;L^{\infty}(\O))}\leq C(n)$ since $\varphi^{''}_n$ is compactly supported, and $\frac{4}{3}(\gamma+\frac{1}{4})\leq \frac{5\gamma}{3}$ for any $\gamma>1$. \\
Thanks to part b of Lemma \ref{Lemma for the cut function} and  $$\varphi^{''}_n(\v_{\kappa}):\nabla\u_{\kappa}=4\nabla\u_{\kappa}\tilde{\varphi}^{''}_n(|\v_{\kappa}|^2)\v_{\kappa}\otimes\v_{\kappa}+2\Dv\u_{\kappa}\tilde{\varphi}'_n(|\v_{\kappa}|^2),$$ we have
\begin{equation}
\begin{split}
|P_{11}|&\leq 4 \int_0^T\int_{\O}\psi(t)|\phi_K|^2|\rho_{\kappa}^{\gamma}||\nabla\u_{\kappa}|\,dx\,dt
+2\int_0^T\int_{\O}\psi(t)|\phi_K|^2|\tilde{\varphi}'_n(|\v_{\kappa}|^2)||\rho_{\kappa}^{\gamma}||\Dv\u_{\kappa}|\,dx\,dt
\\& \leq 4\|\psi\|_{L^{\infty}}\int_0^T\int_{\O}\rho_{\kappa}|\nabla\u_{\kappa}|^2\,dx\,dt+C(\|\psi\|_{L^{\infty}})\int_0^T\int_{\O}\rho_{\kappa}^{2\gamma-1}\,dx\,dt
\\&
+2\int_0^T\int_{\O}\psi(t)|\phi_K|^2|\tilde{\varphi}'_n(|\v_{\kappa}|^2)||\rho_{\kappa}^{\gamma}||\Dv\u_{\kappa}|\,dx\,dt,
\end{split}
\end{equation}
and the term
\begin{equation}
\begin{split}
&2\int_0^T\int_{\O}\psi(t)|\phi_K|^2|\tilde{\varphi}'_n(|\v_{\kappa}|^2)||\rho_{\kappa}^{\gamma}||\Dv\u_{\kappa}|\,dx\,dt
\\& \leq 2\int_0^T\int_{\O}\psi(t)|\phi_K|^2|\tilde{\varphi}'_n(|\v_{\kappa}|^2)|\rho_{\kappa}|\mathbb{D}\u_{\kappa}|^2 \,dx\,dt
\\&\quad\quad\quad\quad\quad+C(\|\psi\|_{L^{\infty}})\int_0^T \int_{\O}|\tilde{\varphi}'_n(|\v_{\kappa}|^2)|\rho_{\kappa}^{2\gamma-1} \,dx\,dt.
\end{split}
\end{equation}
Thus,
\begin{equation}
\begin{split}
\label{P11}
|P_{11}|\leq 4\|\psi\|_{L^{\infty}}&\int_0^T\int_{\O}\rho_{\kappa}|\nabla\u_{\kappa}|^2\,dx\,dt
\\&+2\int_0^T\int_{\O}\psi(t)|\phi_K|^2|\tilde{\varphi}'_n(|\v_{\kappa}|^2)|\rho_{\kappa}|\mathbb{D}\u_{\kappa}|^2 \,dx\,dt
\\&\quad\quad\quad+
C(\|\psi\|_{L^{\infty}})\int_0^T \int_{\O}(1+\tilde{\varphi}'_n(|\v_{\kappa}|^2))\rho_{\kappa}^{2\gamma-1} \,dx\,dt.
\end{split}
\end{equation}
The first right hand side term will be controlled by $$4\|\psi\|_{L^{\infty}}\left(\int_{\O}\left(\rho_0|\u_0|^2+\frac{\rho^{\gamma}_0}{\gamma-1}+|\nabla\sqrt{\rho_0}|^2-r_0\log_{-}\rho_0\right)\,dx+2 E_0\right)$$
due to \eqref{estimate on u};
and the second right hand side term will be absorbed by the dispersion term $A_1$ in \eqref{A1 absorb term}.
By Lemma \ref{lemma on vanishing kappa}, we have
\begin{equation}
\label{the last term}
\begin{split}
&\int_0^T \int_{\O}(1+\tilde{\varphi}'_n(|\v_{\kappa}|^2)\rho_{\kappa}^{2\gamma-1} \,dx\,dt\to
\int_0^T \int_{\O}(1+\tilde{\varphi}'_n(|\u|^2))\rho^{2\gamma-1} \,dx\,dt
\end{split}
\end{equation}
as $\kappa\to 0.$

Note that
\begin{equation}
\int_0^T\int_{\O}\psi(t)\varphi'_n(\v_{\kappa})(r_0\u_{\kappa}+r_1\rho_{\kappa}|\u_{\kappa}|^2\u_{\kappa})\,dx\,dt\geq 0,
\end{equation}
  so this term can be dropped directly.

We treat the other terms  in $F$ one by one,
\begin{equation}
\begin{split}
&\int_0^T\int_{\O}\left|\psi(t)\varphi'_n(\v_{\kappa})\rho_{\kappa}^2\u_{\kappa}\phi_K'(\rho_{\kappa})\Dv\u_{\kappa}\right|\,dx\,dt
\\&\leq C(n,\psi)\|\rho_{\kappa}^{\frac{1}{4}}\u_{\kappa}\|_{L^4((0,T;L^4(\O))}\|\sqrt{\rho_{\kappa}}\Dv\u_{\kappa}\|_{L^2((0,T;L^2(\O))}
\\&\times\|\phi'_K(\rho_{\kappa})\sqrt{\rho_{\kappa}}\|_{L^{\infty}}\|\rho_{\kappa}^{\frac{3}{4}}\|_{L^4((0,T;L^4(\O))}
\leq C(n,\psi)\kappa^{\frac{3}{8}}\to 0
\end{split}
\end{equation}
as $\kappa\to 0$, where we used Sobolev inequality,
and $|\phi'_K(\rho_{\kappa})\sqrt{\rho_{\kappa}}|\leq \frac{2}{\sqrt{K}};$
\begin{equation}
\begin{split}
&\int_0^T\int_{\O}\left|\psi(t)\varphi'_n(\v_{\kappa})\rho_{\kappa}\nabla\phi_K(\rho_{\kappa})\mathbb{D}\u_{\kappa}\right|\,dx\,dt
\\&\leq C(n,\psi)\frac{|\phi'_K(\rho_{\kappa})\sqrt{\rho_{\kappa}}|}{\kappa^{\frac{1}{4}}}\left(\kappa^{\frac{1}{4}}\|\nabla\rho_{\kappa}^{\frac{1}{4}}\|_{L^4((0,T;L^4(\O))}\right)\|\sqrt{\rho_{\kappa}}\mathbb{D}\u_{\kappa}\|_{L^2((0,T;L^2(\O))}\|\rho_{\kappa}^{\frac{3}{4}}\|_{L^4((0,T;L^4(\O))}
\\&\leq C(n,\psi)\kappa^{\frac{1}{8}}\to 0
\end{split}
\end{equation}
as $\kappa\to0;$
\begin{equation}
\begin{split}
&\kappa\int_0^T\int_{\O}\left|\psi(t)\varphi'_n(\v_{\kappa})\sqrt{\rho_{\kappa}}\nabla\phi_K(\rho_{\kappa})\Delta\sqrt{\rho_{\kappa}}\right|\,dx\,dt
\\&\leq 2C(n,\psi)\kappa^{\frac{1}{4}}\left(\kappa^{\frac{1}{4}}\|\nabla\rho_{\kappa}^{\frac{1}{4}}\|_{L^4((0,T;L^4(\O))}\right)\|\sqrt{\kappa}\Delta\sqrt{\rho_{\kappa}}\|_{L^2((0,T;L^2(\O))}\|\rho_{\kappa}^{\frac{1}{4}}\|_{L^4(0,T;L^4(\O))}
\\&\leq 2C(n,\psi)\kappa^{\frac{1}{4}}
\to 0
\end{split}
\end{equation}
as $\kappa\to0,$ where we used $|\rho_{\kappa}\phi'_K(\rho_{\kappa})|\leq 1.$ Finally
\begin{equation}
\begin{split}
&\kappa\int_0^T\int_{\O}\left|\psi'(t)\varphi_n(\v_{\kappa})\phi_K(\rho_{\kappa})\nabla\sqrt{\rho_{\kappa}}\Delta\sqrt{\rho_{\kappa}}\right|\,dx\,dt
\\&\leq 2C(n,\psi)\kappa^{\frac{1}{4}}\left(\kappa^{\frac{1}{4}}\|\nabla\rho_{\kappa}^{\frac{1}{4}}\|_{L^4((0,T;L^4(\O))}\right)\|\sqrt{\kappa}\Delta\sqrt{\rho_{\kappa}}\|_{L^2((0,T;L^2(\O))}\|\rho_{\kappa}^{\frac{1}{4}}\|_{L^4(0,T;L^4(\O))}
\\&\leq 2C(n,\psi)\kappa^{\frac{1}{4}}
\to 0
\end{split}
\end{equation}
as $\kappa\to0.$\\
For the term $\mathbf{S}_{\kappa}=\phi_K(\rho_{\kappa})\rho_{\kappa}(\mathbb{D}\u_{\kappa}+\kappa\frac{\Delta\sqrt{\rho_{\kappa}}}{\sqrt{\rho_{\kappa}}}\mathbb{I})=\mathbf{S}_1+\mathbf{S}_2,$ we calculate as follows
\begin{equation}
\begin{split}
&\int_0^T\int_{\O}\psi(t)\mathbf{S}_1:\nabla(\varphi'_n(\v_{\kappa}))\,dx\,dt
\\&=
\int_0^T\int_{\O}\psi(t)\phi_K(\rho_{\kappa})\rho_{\kappa}\mathbb{D}\u_{\kappa}:\nabla(\varphi'_n(\v_{\kappa}))\,dx\,dt
\\&=
\int_0^T\int_{\O}\psi(t)[\nabla\u_{\kappa}\varphi''_n(\v_{\kappa})\rho_{\kappa}]:\mathbb{D}\u_{\kappa}(\phi_K(\rho_{\kappa}))^2\,dx\,dt\\&
+\int_0^T\int_{\O}\psi(t)\rho_{\kappa} \phi_K(\rho_{\kappa})(\u_{\kappa}^T\varphi_n^{''}(\v_{\kappa}))\mathbb{D}\u_{\kappa}\nabla(\phi_K(\rho_{\kappa}))\,dx\,dt
\\&=A_1+A_2.
\end{split}
\end{equation}
For $A_1$, by part a. of Lemma \ref{Lemma for the cut function}, we have
\begin{equation}
\label{inequality A1}
\begin{split}
A_1&=
\int_0^T\int_{\O}\psi(t)[\nabla\u_{\kappa}\varphi''_n(\v_{\kappa})\rho_{\kappa}]:\mathbb{D}\u_{\kappa}(\phi_K(\rho_{\kappa}))^2\,dx\,dt
\\&=2\int_0^T\int_{\O}\psi(t)\tilde{\varphi}'_n(|\v_{\kappa}|^2)(\phi_K(\rho_{\kappa}))^2\rho_{\kappa}\mathbb{D}\u_{\kappa}:\nabla\u_{\kappa}\,dx\,dt
\\&+4
\int_0^T\int_{\O}\psi(t)\rho_{\kappa}(\phi_K(\rho_{\kappa}))^2\tilde{\varphi}^{''}_n(|\v_{\kappa}|^2)(\nabla\u_{\kappa}\v_{\kappa}\otimes\v_{\kappa}):\mathbb{D}\u_{\kappa}\,dx\,dt
\\&=A_{11}+A_{12}.
\end{split}
\end{equation}
Notice that $$\mathbb{D}\u_{\kappa}:\nabla\u_{\kappa}=|\mathbb{D}\u_{\kappa}|^2,$$
thus
\begin{equation}
\begin{split}
\label{A1 absorb term}
A_1&\geq 2\int_0^T\int_{\O}\psi(t)\tilde{\varphi}'_n(|\v_{\kappa}|^2)(\phi_K(\rho_{\kappa}))^2\rho_{\kappa}|\mathbb{D}\u_{\kappa}|^2\,dx\,dt
\\&-4\|\psi\|_{L^{\infty}}\int_0^T\int_{\O}\rho_{\kappa}|\nabla\u_{\kappa}|^2\,dx\,dt,
\end{split}
\end{equation}
where we control $A_{12}$
\begin{equation*}\begin{split}
A_{12}&\leq 4\int_0^T\int_{\O}|\psi(t)|\frac{|\v_{\kappa}|^2}{1+|\v_{\kappa}|^2}\rho_{\kappa}|\nabla\u_{\kappa}|^2\,dx\,dt
\leq
 4\|\psi\|_{L^{\infty}}\int_0^T\int_{\O}\rho_{\kappa}|\nabla\u_{\kappa}|^2\,dx\,dt
\\& \leq
4\|\psi\|_{L^{\infty}}\left(
\int_{\O}\left(\rho_0|\u_0|^2+\frac{\rho^{\gamma}_0}{\gamma-1}+|\nabla\sqrt{\rho_0}|^2-r_0\log_{-}\rho_0\right)\,dx+2 E_0\right),
 \end{split}
\end{equation*}thanks to \eqref{estimate on u}.
For $A_2$, thanks to \eqref{bound of second deritative of cut function}, we can control it as follows
\begin{equation}\begin{split}
|A_2|&\leq C(n,\psi)\|\sqrt{\rho_{\kappa}}\mathbb{D}\u_{\kappa}\|_{L^2((0,T;L^2(\O))}\|\rho_{\kappa}^{\frac{3}{4}}\|_{L^4((0,T;L^4(\O))}
\\&\times(\kappa^{\frac{1}{4}}\|\nabla\rho_{\kappa}^{\frac{1}{4}}\|_{L^4((0,T;L^4(\O))})\frac{\|\phi'_K\sqrt{\rho_{\kappa}}\|_{L^{\infty}((0,T)\times\O)}}{\kappa^{\frac{1}{4}}}
\\&\leq \frac{C}{\sqrt{K}\kappa^{\frac{1}{4}}}=C\kappa^{\frac{1}{8}}\to0
\end{split}
\end{equation}
as $\kappa\to 0$. Note that the first right hand side term of \eqref{A1 absorb term} has a positive sign and control the limit using from the pressure \eqref{P11}.
We need to treat the term related to $\mathbf{S}_2$,
\begin{equation}
\begin{split}
&\kappa\int_0^T\int_{\O}\psi(t)\mathbf{S}_2:\nabla(\varphi'_n(\v_{\kappa}))\,dx\,dt
\\&=
\kappa\int_0^T\int_{\O}\psi(t)\nabla\u_{\kappa}\varphi''_n(\v_{\kappa}):\sqrt{\rho_{\kappa}}\phi_K(\rho_{\kappa})^2\Delta\sqrt{\rho_{\kappa}}\,dx\,dt\\&
+\kappa\int_0^T\int_{\O}\psi(t)\u_{\kappa}\phi_K(\rho_{\kappa})\varphi''_n(\v_{\kappa})\nabla\rho_{\kappa}\sqrt{\rho_{\kappa}}\phi'_K(\rho_{\kappa})\Delta\sqrt{\rho_{\kappa}}\,dx\,dt
\\&=B_1+B_2,
\end{split}
\end{equation}
we control $B_1$ as follows
\begin{equation}
\begin{split}
|B_1|&\leq C(n,\psi)\|\sqrt{\rho_{\kappa}}\nabla\u_{\kappa}\|_{L^2(0,T;L^2(\O))}\|\sqrt{\kappa}\Delta\sqrt{\rho_{\kappa}}\|_{L^2(0,T;L^2(\O))}\sqrt{\kappa}
\\&\leq C\kappa^{\frac{1}{2}}\to 0
\end{split}
\end{equation}
as $\kappa\to 0,$ where we used $\|\phi_K^2\|_{L^{\infty}(0,T;L^{\infty}(\O))}\leq C$.\\
For $B_2$, we have
\begin{equation}
\begin{split}
\label{B2 goes to 0}
|B_2|&\leq
C(n,\psi)\kappa^{\frac{1}{4}}\left(\kappa^{\frac{1}{4}}\|\nabla\rho_{\kappa}^{\frac{1}{4}}\|_{L^4(0,T;L^4(\O))}\right)
\\&\times\|\rho_{\kappa}^{\frac{1}{4}}\|_{L^4(0,T;L^4(\O))}\|\sqrt{\kappa}\Delta\sqrt{\rho_{\kappa}}\|_{L^2(0,T;L^2(\O))}
\|\phi'_K(\rho_{\kappa})\rho_{\kappa}\|_{L^{\infty}(0,T;L^{\infty}(\O))}
\\&\leq C\kappa^{\frac{1}{4}}\to0
\end{split}
\end{equation}
as $\kappa\to 0.$

With \eqref{convergence of the first term in section 4}-\eqref{B2 goes to 0}, in particularly,  letting $\kappa\to 0$ in  \eqref{the second level of MV inequality after limit}, dropping the positive terms on the left side, we have the following inequality
\begin{equation*}
\begin{split}
-\int_0^T\int_{\O}&\psi'(t)\rho\varphi_n(\u)\,dx\,dt
\\&\leq
8\|\psi\|_{L^{\infty}}\left(
\int_{\O}\left(\rho_0|\u_0|^2+\frac{\rho^{\gamma}_0}{\gamma-1}+|\nabla\sqrt{\rho_0}|^2-r_0\log_{-}\rho_0\right)\,dx+2 E_0\right)
\\&+\psi(0)\int_{\O}\rho_0\varphi_n(\u_0)\,dx
+
C(\|\psi\|_{L^{\infty}})\int_0^T \int_{\O}(1+\tilde{\varphi}'_n(|\u|^2))\rho^{2\gamma-1} \,dx\,dt,
\end{split}
\end{equation*}
which in turn gives us Lemma \ref{Lemma kappa goes to 0}.
\end{proof}

\section{Recover the limits as $n\to\infty$.} In this section, we aim at recovering the Mellet-Vasseur type inequality for the weak solutions at the approximation level of compressible Navier-Stokes equations by letting $n\to\infty.$ In particular, we prove Theorem \ref{MV inequality} by recovering the limit from Lemma \ref{Lemma kappa goes to 0}. In this section, $(\rho,\u)$ are the fixed weak solutions.

Our task is to bound the right term of \eqref{MV before n goes to infinity},
\begin{equation}
\label{the last term}
\begin{split}
&C(\|\psi\|_{L^{\infty}})\int_0^T \int_{\O}(1+\tilde{\varphi}'_n(|\u|^2))\rho^{2\gamma-1} \,dx\,dt
\\&\leq C(\|\psi\|_{L^{\infty}})\int_0^T \left(\int_{\O}(\rho^{2\gamma-1-\frac{\delta}{2}})^{\frac{2}{2-\delta}}\,dx\right)^{\frac{2-\delta}{2}}\left(\int_{\O}\rho(1+\tilde{\varphi}'_n(|\u|^2))^{\frac{2}{\delta}}\,dx\right)^{\frac{\delta}{2}}\,dt\\&
\leq C(\|\psi\|_{L^{\infty}})\int_0^T \left(\int_{\O}(\rho^{2\gamma-1-\frac{\delta}{2}})^{\frac{2}{2-\delta}}\right)^{\frac{2-\delta}{2}}\left(\int_{\O}\rho(2+\ln(1+|\u|^2)))^{\frac{2}{\delta}}\,dx\right)^{\frac{\delta}{2}}\,dt,
\end{split}
\end{equation}
where we used part c of Lemma \ref{Lemma for the cut function}.
By \eqref{MV before n goes to infinity} and \eqref{the last term}, we have
\begin{equation*}
\begin{split}
&-\int_0^T\int_{\O}\psi'(t)\rho\varphi_n(\u)\,dx\,dt
\leq \int_{\O}\rho_0\varphi_n(\u_0)\,dx
\\&+8\|\psi\|_{L^{\infty}}\left(
\int_{\O}\left(\rho_0|\u_0|^2+\frac{\rho^{\gamma}_0}{\gamma-1}+|\nabla\sqrt{\rho_0}|^2-r_0\log_{-}\rho_0\right)\,dx+2 E_0\right)
\\&+C(\|\psi\|_{L^{\infty}})\int_0^T \left(\int_{\O}(\rho^{2\gamma-1-\frac{\delta}{2}})^{\frac{2}{2-\delta}}\right)^{\frac{2-\delta}{2}}\left(\int_{\O}\rho(2+\ln(1+|\u|^2)))^{\frac{2}{\delta}}\,dx\right)^{\frac{\delta}{2}}\,dt.
\end{split}
\end{equation*}
Thanks to part e. of Lemma \ref{Lemma for the cut function} and Monotone Convergence Theorem, we have
\begin{equation}
-\int_0^T\int_{\O}\psi'(t)\rho\varphi_n(\u)\,dx\,dt\to -\int_0^T\int_{\O}\psi'(t)\rho(1+|\u|^2)\ln(1+|\u|^2)\,dx\,dt
\end{equation}
as $n\to\infty.$

Letting $n\to\infty$, we have
\begin{equation}
\begin{split}
\label{last level approximation for MV}
&-\int_0^T\int_{\O}\psi'(t)\rho(1+|\u|^2)\ln(1+|\u|^2)\,dx\,dt
\leq \psi(0)\int_{\O}\rho_0(1+|\u_0|^2)\ln(1+|\u_0|^2)\,dx
\\&+8\|\psi\|_{L^{\infty}}\left(
\int_{\O}\left(\rho_0|\u_0|^2+\frac{\rho^{\gamma}_0}{\gamma-1}+|\nabla\sqrt{\rho_0}|^2-r_0\log_{-}\rho_0\right)\,dx+2 E_0\right)
\\&
+C\int_0^T \left(\int_{\O}(\rho^{2\gamma-1-\frac{\delta}{2}})^{\frac{2}{2-\delta}}\right)^{\frac{2-\delta}{2}}\left(\int_{\O}\rho(2+\ln(1+|\u|^2))^{\frac{2}{\delta}}\,dx\right)^{\frac{\delta}{2}}\,dt.
\end{split}
\end{equation}
Taking \begin{equation*}
\psi(t)\begin{cases}=1 \;\;\;\;\;\quad\quad\quad\quad\text{ if }t\leq \tilde{t}-\frac{\epsilon}{2}
\\ =\frac{1}{2}-\frac{t-\tilde{t}}{\epsilon}\;\;\;\;\;\;\;\;\;\;\text{ if } \tilde{t}-\frac{\epsilon}{2}\leq t\leq \tilde{t}+\frac{\epsilon}{2}
\\ =0\,\;\;\;\;\quad\quad\;\quad\quad\text{ if }  t\geq \tilde{t}+\frac{\epsilon}{2},
\end{cases}\end{equation*}
then \eqref{last level approximation for MV} gives for every $\tilde{t}\geq \frac{\epsilon}{2},$
\begin{equation*}
\begin{split}&
\frac{1}{\epsilon}\int_{\tilde{t}-\frac{\epsilon}{2}}^{\tilde{t}+\frac{\epsilon}{2}}\left(\int_{\O}\rho(1+|\u|^2)\ln(1+|\u|^2)\,dx\right)\,dt
\\&\leq\int_{\O}\rho_0(1+|\u_0|^2)\ln(1+|\u_0|^2)\,dx+8\left(
\int_{\O}\left(\rho_0|\u_0|^2+\frac{\rho^{\gamma}_0}{\gamma-1}+|\nabla\sqrt{\rho_0}|^2-r_0\log_{-}\rho_0\right)\,dx+2 E_0\right)
\\&
+C\int_0^T \left(\int_{\O}(\rho^{2\gamma-1-\frac{\delta}{2}})^{\frac{2}{2-\delta}}\right)^{\frac{2-\delta}{2}}\left(\int_{\O}\rho(2+\ln(1+|\u|^2))^{\frac{2}{\delta}}\,dx\right)^{\frac{\delta}{2}}\,dt.
\end{split}
\end{equation*}
This gives Theorem \ref{MV inequality} thanks to the Lebesgue point Theorem.

\section{Recover the weak solutions}
The objective of this section is to apply Theorem \ref{MV inequality} to prove Theorem \ref{main result}. In particular, we aim at establishing the existence of global weak solutions to \eqref{NS equation}-\eqref{initial data} by letting $r_0\to 0$ and $r_1\to 0.$ Let $r=r_0=r_1$, we use $(\rho_r,\u_r)$ to denote the weak solutions to \eqref{last level} verifying Proposition \ref{pro weak solutions} with $\kappa=0$.

By \eqref{energy inequality for approximation} and \eqref{BD entropy for approximation},  one obtains the following estimates,
\begin{equation}
\label{all priori estimates}
\begin{split}
&\|\sqrt{\rho_r}\u_r\|_{L^{\infty}(0,T;L^2(\O))}\leq C;\\
&\|\rho_r\|_{L^{\infty}(0,T;L^1\cap L^{\gamma}(\O))}\leq C;\\
&\|\sqrt{\rho_r}\nabla\u_r\|_{L^2(0,T;L^2(\O))}\leq C;\\
&\|\nabla\sqrt{\rho_r}\|_{L^{\infty}(0,T;L^2(\O))}\leq C;\\
&\|\nabla\rho_r^{\gamma/2}\|_{L^2(0,T;L^2(\O))}\leq C.
\end{split}
\end{equation}
 Theorem \ref{MV inequality} gives us
\begin{equation}
\begin{split}
\label{MV estimate}
\sup_{t\in[0,T]}\int_{\O}\rho_r|\u_r|^2\ln(1+|\u_r|^2)\,dx\leq C.
\end{split}
\end{equation}
It is necessary to remark that all above estimates on \eqref{all priori estimates} and \eqref{MV estimate} are uniformly on $r$.  Thus, we can make use of all estimates to recover the weak solutions by letting $r\to0$. Meanwhile, we have the following estimates from \eqref{energy inequality for approximation},
\begin{equation}
\label{r0 r1 estimates}
\begin{split}
&\int_0^T\int_{\O}r|\u_r|^{2}\,dx\,dt\leq C,\\
&\int_0^T\int_{\O}r\rho_r|\u_r|^4\,dx\,dt\leq C.
\end{split}
\end{equation}

To establish the existence of global weak solutions, we should pass to the limits as $r\to 0$. Following the same line as in \cite{MV}, we can show the convergence of the density and the pressure, prove the strong convergence of $\sqrt{\rho_r}\u_r$ in space $L^2_{loc}((0,T)\times\O)$, the convergence of the
diffusion terms. We remark that Theorem \ref{MV inequality} is the key tool to show the strong convergence of $\sqrt{\rho_r}\u_r.$ Here, we list all related convergence from \cite{MV}. In particular,
\begin{equation}
\label{convergence of density}
\begin{split}
&\sqrt{\rho_r}\to\ \sqrt{\rho}\;\;\text{ almost everywhere and strongly in } L^2_{loc}((0,T)\times\O)),\\
& \rho_r\to \rho \;\;\text{ in }\; C^0(0,T;L^{\frac{3}{2}}_{loc}(\O));
\end{split}
\end{equation}
the convergence of pressure
\begin{equation}
\begin{split}
\label{convegence of pressure}
\rho_r^{\gamma}\to \rho^{\gamma}\;\;\text{ strongly in }\; L^1_{loc}((0,T)\times\O);
\end{split}
\end{equation}
the convergence of the momentum and $\sqrt{\rho_r}\u_r$
\begin{equation}
\label{convergence of the momentum}
\begin{split}
&\rho_r\u_r\to \rho\u \;\;\text{ strongly in }\; L^2(0,T;L^p_{loc}(\O)) \;\;\text { for }\; p\in [1,3/2);\\
&\sqrt{\rho_r}\u_r\to\sqrt{\rho}\u\;\;\text{ strongly in }\; L^2_{loc}((0,T)\times\O);
\end{split}
\end{equation}
and the convergence of the diffusion terms
\begin{equation}
\label{convergence of diffusion terms}
\begin{split}
&\rho_r\nabla\u_r\to\rho\nabla\u\;\;\text{ in } \;\mathfrak{D}^{'},
\\&\rho_r\nabla^{T}\u_r\to\rho\nabla^{T}\u\;\;\text{ in }\;\mathfrak{D}^{'}.
\end{split}
\end{equation}

It remains to prove that terms $r\u_r$ and $r\rho_r|\u_r|^2\u_r$ tend to zero as $r\to0.$
Let $\psi$ be any test function, then we estimate the term $r\u_r$
\begin{equation}
\label{convergence of r0}
\begin{split}
&\left|\int_0^T\int_{\O}r\u_{r}\psi\;dx\,dt\right|\leq \int_0^T\int_{\O}r^{\frac{1}{2}}r^{\frac{1}{2}}|\u_r||\psi|\;dx\,dt
\\&\leq\sqrt{r}\|\sqrt{r}\u_r\|_{L^2((0,T)\times\O)}\|\psi\|_{L^2((0,T)\times\O)}\to 0
\end{split}
\end{equation}
as $r\to 0$,
due to \eqref{r0 r1 estimates}.

We also estimate $r\rho_r|\u_r|^2\u_r$ as follows
\begin{equation}
\begin{split}
\label{convergence of r1}
&\left|\int_0^T\int_{\O}r\rho_r|\u_r|^2\u_r\psi\,dx\,dt\right|
\\&\leq\sqrt{r}\|\sqrt{r}\sqrt{\rho_r}|\u_r|^2\|_{L^2((0,T)\times\O)}\|\sqrt{\rho_r}\u_r\|_{L^{\infty}(0,T;L^2(\O))}\|\psi\|_{L^{\infty}((0,T)\times\O)}\to 0
\end{split}
\end{equation}
as $r\to0.$

The global weak solutions to \eqref{last level} verifying Proposition \ref{pro weak solutions} with $\kappa=0$ is in the following sense, that is, $(\rho_r,\u_r)$
satisfy the following weak formulation
\begin{equation}
\begin{split}
\label{weak formulation before passing limit}
&\int_{\O}\rho_r\u_r\cdot\psi\,dx|_{t=0}^{t=T}-\int_{0}^{T}\int_{\O}\rho_r\u_r\psi_t\,dx\,dt
-\int_{0}^{T}\int_{\O}\rho_r\u_r\otimes\u_r:\nabla \psi\,dx\,dt\\&-\int_{0}^{T}\int_{\O}\rho_r^{\gamma}\Dv\psi\,dx\,dt
-\int_0^T\int_{\O}\rho\mathbb{D}\u_r:\nabla\psi\,dx\,dt
\\& =-r\int_{0}^{T}\int_{\O}\u_r\psi\,dx\,dt-r\int_0^T\int_{\O}\rho_r|\u_r|^2\u_r\psi\,dx\,dt,
\end{split}
\end{equation}
where
 $\psi$ is any test function.

Letting $r\to0$ in the weak formulation \eqref{weak formulation before passing limit}, and applying \eqref{convergence of density}-\eqref{convergence of r1}, one obtains that
\begin{equation}
\begin{split}
\label{weak formulation}
&\int_{\O}\rho\u\cdot\psi\,dx|_{t=0}^{t=T}-\int_{0}^{T}\int_{\O}\rho\u\psi_t\,dx\,dt
-\int_{0}^{T}\int_{\O}\rho\u\otimes\u:\nabla \psi\,dx\,dt\\&-\int_{0}^{T}\int_{\O}\rho^{\gamma}\Dv\psi\,dx\,dt
-\int_0^T\int_{\O}\rho\mathbb{D}\u:\nabla\psi\,dx\,dt=0.
\end{split}
\end{equation}
Thus we proved Theorem \ref{main result} for any initial value $(\rho_0,\u_0)$ verifying \eqref{initial condition} with the additional condition $\rho_0\geq\frac{1}{m_0}.$ This last condition can be dropped using \cite{MV}.

\section{acknowledgement}
A. Vasseur's research was supported in part by NSF grant DMS-1209420. C. Yu's reserch was
supported in part by an AMS-Simons Travel Grant.

\end{document}